\newcommand{\R}{\mathbb{R}}
\newcommand{\N}{\mathbb{N}}
\newcommand{\I}{1{\hskip -2.5 pt}\hbox{I} }
\newcommand{\ddr}{\mathrm{d}}
\newcommand{\edr}{\mathrm{e}}
\newcommand{\Indic}{\mathds{1}}
\newcommand{\Small}[1]{\textstyle #1 \displaystyle}
\newcommand{\comillas}[1]{``\,#1\,''}
\newcommand{\F}{\mathcal{F}}
\newcommand{\Prob}{\mathrm{Pr}}
\newcommand{\diag}{\mbox{diag}}
\newcommand{\tr}{\mbox{tr}}
\newcommand{\bj}{\boldsymbol{j}}
\newcommand{\bl}{\boldsymbol{l}}
\newcommand{\sigmadown}{\underline{\sigma}}
\newcommand{\sigmaup}{\bar{\sigma}}
\newtheorem{theorem}{Theorem}
\newtheorem{corollary}{Corollary}
\newtheorem{proposition}{Proposition}
\newtheorem{lemma}{Lemma}
\newtheorem{remark}{Remark}
\begin{document}
%===========================================================================

\title{Posterior asymptotics of nonparametric location-scale mixtures for multivariate density estimation}

\author{Antonio Canale\thanks{antonio.canale@unito.it},
Pierpaolo De Blasi \thanks{pierpaolo.deblasi@unito.it}}
\date{Accepted version, \today}

\maketitle

\begin{abstract}
Density estimation represents one of the most successful applications of Bayesian nonparametrics. 
In particular, Dirichlet process mixtures of normals are the gold standard for density estimation and their asymptotic properties have been studied extensively, especially in the univariate case. 
However a gap between practitioners and the current theoretical literature is present. 
So far, posterior asymptotic results in the multivariate case are available only for location mixtures of Gaussian kernels with independent prior on the common covariance matrix, while in practice as well as from a conceptual point of view a location-scale mixture is often preferable. 
In this paper we address posterior consistency for such general mixture models by adapting a  convergence rate result which combines the usual low-entropy, high-mass sieve approach  with a suitable summability condition.
Specifically, we establish consistency for Dirichlet process mixtures of Gaussian kernels with various prior specifications on the covariance matrix.
Posterior convergence rates are also discussed.
\end{abstract}

{\center \textbf{Keywords: }}
Bayesian Nonparametrics; density estimation; Dirichlet mixture; factor model; posterior asymptotics; sparse random eigenmatrices

%============================================================================
\section{Introduction}
\label{sec:intro}

Multivariate  density estimation is a fundamental problem in nonparametric inference being also the starting point for nonparametric regression, clustering, and robust estimation. For modeling continuous densities, standard nonparametric Bayes methods rely on Dirichlet process (DP) \citep{art:ferg:1973} mixture models of the form
\begin{equation}\label{eq:mix1}
  f(x) = \int K(x; \theta)dP(\theta),\quad P \sim DP(\alpha\,P^*),
\end{equation}
where $K(x; \theta)$ is a probability kernel depending on some finite-dimensional parameter $\theta$ and $DP(\alpha P^*)$ is a Dirichlet process with total mass $\alpha>0$ and $P^*$ a probability measure over the space of parameters $\theta$.  Model \eqref{eq:mix1} has been introduced by \citet{lo:1984} and made popular by \citet{esco:west:1995,mull:etal:1996}.
For  densities on $\R^d$, a common choice for the kernel $K(x;\theta)$
is the normal density $\phi_\Sigma(x-\mu)$, that is
  $$
  \phi_\Sigma(x-\mu)
  =(2\pi)^{-d/2} \det(\Sigma)^{-1/2} \exp\left\{-\frac{1}{2} 
  (x-\mu)^T \Sigma^{-1}(x-\mu) \right\},
  $$
for $\mu \in \R^d$ a $d$-dimensional vector of locations and $\Sigma$ a positive definite symmetric matrix of variance-covariances. Depending on whether or not the mixture involves the scale parameter $\Sigma$, we will speak of location and location-scale mixtures, respectively. In the former case, all components in the mixture share the same $\Sigma$ which is typically  modeled with an independent prior. 

In the univariate case, the asymptotic properties of DP mixtures are well known for both location  \citep{ghos:etal:1999,lijoi:pruen:walker:2005,ghos:vand:2007,walk:etal:2007} and location-scale mixtures  \citep{tokd:2006,ghos:vand:2001}.  
In the multivariate case, the only results, to our knowledge, are confined to the case of location mixtures. 
Posterior consistency is studied in \citet{wu:ghos:2010} assuming a truncated inverse-Wishart prior for $\Sigma$. \citet{shen:etal:2012} improve considerably on these results by deriving adaptive posterior convergence rates and remove the artificial truncation of the Wishart prior, which prevents an effective implementation. One of the main tools for achieving their result is the use of the stick-breaking representation of the DP to build a low-entropy, high-mass sieve on the space of mixed densities, a procedure whose extension to the multivariate case is a challenging task.

The lack of asymptotic results for mutivariate location-scale mixtures is somehow in contrast with their predominant use in applications \citep{mull:etal:1996,MacE:Mull:1998,gorur:2009, chen:etal:2010}.
In this paper, we fill this gap in the current literature by establishing posterior consistency for DP location-scale mixtures of multivariate normals with ready to verify conditions on the prior parameter $P^*$. In particular, a distinctive condition with respect to the location mixtures case involves the existence of moments to a certain order of the ratio between the largest and smallest eigenvalues of $\Sigma$, a quantity known in random matrix literature as condition number. This moment condition is satisfied by the inverse-Wishart as well as by other prior specifications that enable scaling to higher dimension. The consistency result exploits the sieve construction suggested by \citet{shen:etal:2012} by adapting a convergence rate theorem of \citet{ghos:vand:2007}. Such an adaptation allows to relax the growth condition on the entropy of the sieve through a summability condition which suitably weighs entropy numbers with prior probabilities, an idea first appeared in \citet{lijoi:pruen:walker:2005} and \citet{walk:etal:2007}. We also discuss some of the technical issues related to the challenging task of deriving posterior convergence rates for heavy tailed densities together with some preliminary results.

The layout of the paper is as follows. In Section~\ref{sec:cons}, sufficient conditions, on the true $f_0$ and on the prior, to obtain posterior  consistency of DP location-scale mixtures are given. In Section~\ref{sec:examples} some particular prior specifications satisfying such conditions are discussed.  Section~\ref{sec:rates} is about convergence rates and the paper ends with a final discussion.

%============================================================================
\section{Posterior consistency}
\label{sec:cons}

For any $d \times d$ matrix $A$ with real eigenvalues, let $\lambda_1(A)\geq\ldots\geq\lambda_d(A)$ denote its eigenvalues in decreasing order and $\|A\|_2=\max_{x\neq 0}\|Ax\|/\|x\|$ be its spectral norm. We denote by $\mathcal{S}$ be the space of $d \times d$ positive definite matrices and by $\mathscr{P}$ the space of probability measures on $\R^d\times \mathcal{S}$. We consider DP location-scale mixtures of the type
\begin{equation}\label{eq:mixingDP2}
  f_P(x) = \int \phi_\Sigma(x-\mu) \ddr P(\mu, \Sigma), \,\,\,\,\, 
  P \sim DP(\alpha P^*).
\end{equation}
where $P^*\in\mathscr{P}$ with $\mu$ and $\Sigma$ independent under $P^*$.
In marginalizing out $P$ one can write model (\ref{eq:mixingDP2}) as 
\begin{equation}\label{eq:mixingDP2stick}
  f_{P}(x) = \sum_{h=1}^\infty \pi_h \phi_{\Sigma_h}(x-\mu_h), \quad
  (\mu_h,\Sigma_h) \sim \mbox{iid }P^*,  \quad
  \pi_h = V_h \prod_{k<h} (1-V_k),
\end{equation}
where  $V_h \sim \mbox{iid }\text{beta}(1,\alpha)$. 
We denote by $\Pi^*$ the DP prior on $\mathscr{P}$ and by $\Pi$ the prior induced by \eqref{eq:mixingDP2} on the space $\F$ of density functions on $\R^d$. 

As  customary in Bayesian asymptotics, we take the data $X_1\,\ldots,X_n$ to be i.i.d. from some \comillas{true} density $f_0\in{\cal F}$ and study the behavior of the posterior as $n\to\infty$ with respect to the $n$-products measure $F_0^n$, $F_0$ being the probability measure associated to $f_0$. 
 As metrics on $\F$ we consider the Hellinger  $d(f,g)=\{\int (\sqrt f-\sqrt g)^2\}^{1/2}$, and the $L_1$  $\|f-g\|_1=\int|f-g|$,  which induce equivalent  topologies in view of
  $d^2(f,g)\leq\|f-g\|_1\leq 2d(f,g)$.
By posterior consistency  at $f_0$ we mean that, for any $\epsilon >0$, 
  $$\Pi(\{f:\ \rho(f_0,f) > \epsilon\} \mid X_1, \dots, X	_n) \to 0$$ 
in $F_0^n$-probability where $\rho$ is either the Hellinger or the $L_1$-metric.

It is known that,  for the posterior distribution to accumulate around $f_0$, one has to establish first some support condition of the prior. We say that $\Pi$ satisfies the Kullback-Leibler (KL) property at $f_0$ if
\begin{equation}\label{eq:KL}
  \Pi\left\{f:\ \Small{\int}\log(f_0/f)f_0\leq \eta\right\}\geq 0,\quad
  \mbox{for any }\eta>0.
\end{equation}
The KL property has been established in Theorem 5 of \citet{art:wu:ghos:2008} for location-scale mixtures of Gaussian kernels with scalar covariance matrices, i.e. $\Sigma=\sigma^2 I$ for $\sigma^2>0$ and $I$ the $d\times d$ identity matrix. Minor adaptations are needed to extend this result to the case of non scalar $\Sigma$. 
Sufficient conditions for \eqref{eq:KL}  involve a mild requirement on the weak support of the DP prior $\Pi^*$ on $\mathscr{P}$ together with some regularity assumptions on $f_0$ like the existence of moments up to a certain order. As for the weak support of $\Pi^*$, we shall assume that the prior mean $P^*$ of the DP is supported on all $\R^d\times \mathcal{S}$, where we take as distance the sum of the Euclidean norm on $\R^d$ and the spectral norm on $\mathcal{S}$. All priors considered in Section 3 satisfy this requirement. As for $f_0$, the same regularity conditions of \citet[Theorem 5]{art:wu:ghos:2008} apply; they are repeated, for readers' convenience, in Lemma \ref{lemma:KLsupport} whose proof is reported in the Appendix.
%
%% Lemma
%
\begin{lemma}\label{lemma:KLsupport}
Let $f_0 \in \mathcal{F}$ and $\Pi$ denote the prior on $\mathcal{F}$ induced by (\ref{eq:mixingDP2}). Assume that $f_0$ satisfies the following conditions: 
$0 < f_0(x) < M$ for some constant $M$ and all $x \in \R^d$; 
$|\int f_{0}(x) \log f_0 (x) \ddr x| < \infty$; 
for some $\delta >0$, $\int f_0(x) \log \frac{f_0(x)}{\phi_\delta(x)} \ddr x < \infty$, where $\phi_\delta(x) = \inf_{|| t - x||< \delta} f_0(t)$; 
for some $\eta >0$, $\int ||x||^{2(1+\eta)}f_0(x) \ddr x < \infty$.
Then $\Pi$ satisfies \eqref{eq:KL}.
\end{lemma}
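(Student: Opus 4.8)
The plan is to follow the standard two--step route for Kullback--Leibler support of mixture priors, as in \citet[Theorem~5]{art:wu:ghos:2008}, the only genuinely new point being that the mixing measure now lives on $\R^d\times\mathcal{S}$ rather than on $\R^d\times\R_+$. Writing $f_P(x)=\int\phi_\Sigma(x-\theta)\,\ddr P(\theta,\Sigma)$ for $P\in\mathscr{P}$ and using the pointwise identity $\log(f_0/f_P)=\log(f_0/f_{P_0})+\log(f_{P_0}/f_P)$, it suffices, for each $\eta>0$: (i) to exhibit a \emph{finitely supported} mixing measure $P_0$ with $\int f_0\log(f_0/f_{P_0})\le\eta/2$; and (ii) to show that $\Pi^*$ puts positive mass on a weak neighbourhood of $P_0$ all of whose members $P$ satisfy $\int f_0\log(f_{P_0}/f_P)\le\eta/2$. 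Then $\int f_0\log(f_0/f_P)\le\eta$ on that neighbourhood, so $\Pi\{f:\int\log(f_0/f)f_0\le\eta\}$ is bounded below by the $\Pi^*$--probability of the neighbourhood, which is positive; this is exactly \eqref{eq:KL}.

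For step (i) I would use only scalar covariances, exactly as in \citet{art:wu:ghos:2008}. First convolve $f_0$ with $\phi_{\sigma^2 I}$: the lower bound $f_0\ast\phi_{\sigma^2 I}(x)\ge\phi_\delta(x)\,\Pr(\|Z\|<\delta/\sigma)$ with $Z\sim N(0,I)$, combined with the hypothesis $\int f_0\log(f_0/\phi_\delta)<\infty$, controls $-\log\bigl(f_0\ast\phi_{\sigma^2 I}\bigr)$ uniformly for small $\sigma$ (note $\Pr(\|Z\|<\delta/\sigma)\to1$); the hypothesis $f_0<M$ bounds $\log\bigl(f_0\ast\phi_{\sigma^2 I}\bigr)$ above by $\log M$; and since $f_0\ast\phi_{\sigma^2 I}\to f_0$ at every Lebesgue point, hence $F_0$--a.s., the hypotheses $|\int f_0\log f_0|<\infty$ and $\int f_0\log(f_0/\phi_\delta)<\infty$ furnish an $f_0$--integrable dominating function, so dominated convergence gives $\int f_0\log\bigl(f_0/(f_0\ast\phi_{\sigma^2 I})\bigr)\to0$ as $\sigma\downarrow0$. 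Fixing $\sigma$ small, one then restricts $f_0$ to a large ball and renormalises, and replaces the resulting compactly supported mixing measure by a finite mixture $P_0=\sum_j p_j\,\delta_{(\theta_j,\sigma^2 I)}$; each replacement changes $\int f_0\log(f_0/\cdot)$ by an arbitrarily small amount, the large--$\|x\|$ contributions being absorbed by the moment hypothesis $\int\|x\|^{2(1+\eta)}f_0<\infty$ and the bounded--$\|x\|$ contributions by uniform continuity of $\theta\mapsto\phi_{\sigma^2 I}(x-\theta)$ on compacts. This part is the proof of \citet[Theorem~5]{art:wu:ghos:2008} essentially verbatim.

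For step (ii), $P_0$ is supported on finitely many points of $\R^d\times\mathcal{S}$; since the prior mean $P^*$ has full topological support on $\R^d\times\mathcal{S}$, the weak support of $DP(\alpha P^*)$ is all of $\mathscr{P}$, so every weak neighbourhood of $P_0$ has positive $\Pi^*$--probability. It then remains to check that $P\mapsto f_P$ carries a small enough weak neighbourhood of $P_0$ into $\{f:\int f_0\log(f_{P_0}/f)\le\eta/2\}$: for $\|x\|\le a$, weak convergence together with boundedness and continuity of $(\theta,\Sigma)\mapsto\phi_\Sigma(x-\theta)$ makes $f_P(x)$ uniformly close to $f_{P_0}(x)>0$; for $\|x\|>a$, $f_P(x)$ stays bounded below by a fixed Gaussian--type tail $c\,\edr^{-c'\|x\|^2}$ (mass cannot entirely escape a fixed compact set when $P$ is weakly close to $P_0$), so $\log(f_{P_0}(x)/f_P(x))\le c''(1+\|x\|^2)$ there, and the moment hypothesis makes $\int_{\|x\|>a}f_0\,\log(f_{P_0}/f_P)$ small for $a$ large. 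Combining the two regions gives step (ii), hence \eqref{eq:KL}.

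The genuinely delicate part is step (i)---in particular the passage to the limit in $\int f_0\log\bigl(f_0/(f_0\ast\phi_{\sigma^2 I})\bigr)$ and the control of the truncation and discretisation errors---where all four regularity conditions enter and must be combined carefully; this is exactly the content of \citet[Theorem~5]{art:wu:ghos:2008}, which is why only minor adaptation is claimed. The non--scalar--$\Sigma$ modification is light: since only the existence of \emph{one} convenient mixing measure is required, one may keep scalar covariances throughout step (i), and the only thing to verify in step (ii) is that the DP on $\mathscr{P}$ still charges weak neighbourhoods of the required finitely supported $P_0$, which follows from $P^*$ having full support on $\R^d\times\mathcal{S}$; the weak--to--Kullback--Leibler continuity of $P\mapsto f_P$ on compacts is unaffected by whether $\Sigma$ is scalar.
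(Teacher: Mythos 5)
Your proposal is correct and follows essentially the same route as the paper: decompose $\int f_0\log(f_0/f_P)$ through an intermediate compactly supported mixing measure obtained from \citet[Theorem~5]{art:wu:ghos:2008} (with scalar covariances), then use the full weak support of the DP together with the two--region bound (uniform closeness of $f_P$ to $f_{P_\epsilon}$ on a compact set, plus a Gaussian--tail lower bound $\inf_{(\theta,\Sigma)\in D}\phi_\Sigma(x-\theta)$ controlled by the moment condition) to make the second term small on a weak neighbourhood of positive prior mass. The only presentational difference is that the paper packages your step (ii) as a verification of conditions (A7)--(A9) of Lemma~3 in \citet{art:wu:ghos:2008}, whereas you re-derive that lemma's content directly.
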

The KL property \eqref{eq:KL} plays a very important role in consistency since it provides a lower bound for the denominator of the posterior probability. However, posterior consistency in non-compact spaces, such as $\cal F$, requires an additional condition on the prior which involves the metric entropy of $\cal F$ (see below for a formal definition).
A critical step is to introduce a compact subset $\mathcal{F}_n$, called sieve, which is indexed by the sample size $n$ and eventually grows to fill the entire parameter space as $n \to \infty$ . According to Theorem 2 of \citet{ghos:etal:1999}, the metric entropy of $\mathcal{F}_n$ has to grow slower than linearly in $n$, while the prior probability assigned to $\mathcal{F}_n^{c}$, the complement of the sieve,  needs to decrease exponentially fast in $n$. See Theorem 2.1 of \citet{ghos:etal:2000} for similar ideas applied to posterior convergence rates. 
The choice of the sieve is a delicate issue in multivariate density estimation, since the metric entropy tends to blow up with the dimension $d$. A novel sieve construction has been introduced in \citet{shen:etal:2012} and it has proven successful in deriving adaptive posterior convergence rates in the case of location mixtures with independent inverse-Wishart prior on $\Sigma$. In particular, the sieve relies on the stick-breaking representation of the DP as in \eqref{eq:mixingDP2stick}. In adapting this sieve construction to location-scale mixtures (see Lemma \ref{lemma:sieve} in the Appendix), the tail behavior of $P^*$ with respect to the condition number, i.e. the ratio of the maximum and minimum eigenvalue of $\Sigma$, plays a crucial role. A straight application of Theorem 2 of \citet{ghos:etal:1999} would require a too restrictive condition on this tail behavior, ruling out common choices for the part of $P^*$ involving $\Sigma$ like the inverse-Wishart distribution. See Remark~\ref{rem:nogoshetal} in the Appendix for a detailed explanation. For this reason we resort to a different posterior consistency  theorem which consists in a modification of Theorem 5 of \citet{ghos:vand:2007}. The main idea is to relax the growth condition on the entropy of $\F_n$ through a summability condition of entropy numbers weighted by square roots of prior probabilities. 
This modification can be applied also to relax the usual exponential tail behavior of the marginal of $P^*$ on the location parameters $\mu$, with a weaker power tail decay. As pointed out by \citet{ghos:vand:2007}, there is a trade off between entropy and summability which is worth exploring in Bayesian asymptotics. Our result is a step in this direction. Similar ideas had earlier appeared in \citet{lijoi:pruen:walker:2005} and in \citet{walk:etal:2007}.

To state the posterior convergence result, we recall the definition of entropy of ${\cal G}\subset{\cal F}$ as $\log N(\epsilon,{\cal G},d)$ where $N(\epsilon,{\cal G},d)$ is the minimum integer $N$ for which there exists $f_1,\ldots,f_N\in {\cal F}$ such that ${\cal G}\subset\bigcup_{j=1}^N\{f:\, d(f,f_j)<\epsilon\}$.
%
%% Theorem
%
\begin{theorem}\label{th:Gho07}
Suppose ${\cal F}_n\subset \F$ can be partitioned as $\bigcup_j{\cal F}_{n,j}$ such that, for $\epsilon>0$,
\begin{gather}
  \label{eq:complement}
  \Pi(\F_n^c)\lesssim \edr^{-bn},\quad
  \mbox{for some }b>0\\
  \label{eq:summability1}
  \Small{ \sum_j \sqrt{N(2\epsilon,\F_{n,j},d)}\sqrt{\Pi(\F_{n,j})} }
  \edr^{-(4-c)n\epsilon^2}\to 0,\quad
  \mbox{for some }c>0
\end{gather}
Then $\Pi(f:\ d(f_0,f)>8\epsilon|X_1,\ldots,X_n)\to 0$ in 
$F_0^n$-probability for any $f_0$ satisfying \eqref{eq:KL}.
\end{theorem}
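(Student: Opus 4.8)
The plan is to follow the classical testing-plus-KL-support scheme of \citet{ghos:etal:1999, ghos:etal:2000}, but with the linear-entropy growth condition replaced by the summability condition \eqref{eq:summability1}, in the spirit of \citet{ghos:vand:2007}. Fix $\epsilon>0$ and write $A_\epsilon=\{f:\ d(f_0,f)>8\epsilon\}$. The posterior mass of $A_\epsilon$ is the ratio $\int_{A_\epsilon}\prod_i (f/f_0)(X_i)\,d\Pi(f)\,/\,\int \prod_i (f/f_0)(X_i)\,d\Pi(f)$. For the denominator, the KL property \eqref{eq:KL} combined with a standard argument (e.g. Lemma 8.1 of \citet{ghos:etal:2000}) gives that, for every $\beta>0$, the denominator is eventually bounded below by $\edr^{-\beta n}$ with $F_0^n$-probability tending to one; I will use this with $\beta=c\epsilon^2$, say, where $c$ is the constant from \eqref{eq:summability1}.

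For the numerator I would split $A_\epsilon$ into $A_\epsilon\cap\F_n^c$ and $A_\epsilon\cap\F_n$. On $\F_n^c$, bound the numerator crudely by $\Pi(\F_n^c)$ in expectation, which by \eqref{eq:complement} is $\lesssim\edr^{-bn}$; choosing $\beta<b$ makes this piece negligible against the denominator bound. On $A_\epsilon\cap\F_n$, use the partition $\F_n=\bigcup_j\F_{n,j}$ and construct, for each $j$, a test $\varphi_{n,j}$ of $f_0$ against the Hellinger ball $\{f\in\F_{n,j}:\ d(f_0,f)>8\epsilon\}$. The key ingredient here is the existence of exponentially powerful tests for Hellinger-separated alternatives (Le Cam / Birgé; see \citet{ghos:etal:2000}): a single $\epsilon$-net argument over $\F_{n,j}$ of cardinality $N(2\epsilon,\F_{n,j},d)$ yields a test with type-I error $\le N(2\epsilon,\F_{n,j},d)\,\edr^{-Kn\epsilon^2}$ and type-II error $\le \edr^{-Kn\epsilon^2}$ uniformly, with a universal constant $K$ that can be taken to be $2$ (so that $4-c$ in the exponent of \eqref{eq:summability1} is the relevant threshold once one accounts for the $d^2$-vs-$\|\cdot\|_1$ conversion and the $8\epsilon$ radius). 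Set $\varphi_n=\max_j\varphi_{n,j}$.

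Then I would run the usual three-term bound: $E_{F_0^n}\Pi(A_\epsilon\mid X_{1:n})\le E_{F_0^n}\varphi_n+\Pr(\text{denominator}<\edr^{-\beta n})+\edr^{\beta n}E_{F_0^n}\big[(1-\varphi_n)\int_{A_\epsilon\cap\F_n}\prod_i(f/f_0)(X_i)\,d\Pi(f)\big]+E_{F_0^n}\Pi(\F_n^c)/(\text{denominator})$. The first term is $\le\sum_j N(2\epsilon,\F_{n,j},d)\,\edr^{-2n\epsilon^2}$; the third is $\le\edr^{\beta n}\sum_j \edr^{-2n\epsilon^2}\Pi(\F_{n,j})$. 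Applying Cauchy–Schwarz to the type-I sum, $\sum_j N(2\epsilon,\F_{n,j},d)\,\edr^{-2n\epsilon^2}\le\big(\sum_j\sqrt{N}\sqrt{\Pi(\F_{n,j})}\,\edr^{-(4-c)n\epsilon^2}\big)\cdot\big(\sum_j\sqrt{N/\Pi(\F_{n,j})}\,\edr^{-cn\epsilon^2}\big)$ — but the cleaner route, which is the one I would actually write, is to note directly that $\sum_j N\,\edr^{-2n\epsilon^2}=\sum_j\sqrt N\sqrt N\,\edr^{-2n\epsilon^2}$ and that along the partition $\Pi(\F_{n,j})\le 1$, so $\sqrt N\le\sqrt N/\sqrt{\Pi(\F_{n,j})}$; combined with $\sum_j\sqrt N\sqrt{\Pi(\F_{n,j})}\,\edr^{-(4-c)n\epsilon^2}\to0$ and the elementary inequality $\sqrt{ab}\le a+b$, one gets both the testing term and the $\edr^{\beta n}$-weighted term to vanish once $\beta$ is small enough (any $\beta<\min\{b,\,(2-\text{slack})\epsilon^2\}$ works, and the factor $4-c$ is exactly engineered to absorb the $\edr^{\beta n}$ from the denominator together with the $\edr^{-2n\epsilon^2}$ from the type-II errors). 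The remaining term involving $\Pi(\F_n^c)$ over the denominator is $\lesssim\edr^{-bn}/\edr^{-\beta n}\to0$.

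The main obstacle is the bookkeeping in the second step: making sure the single exponential rate from the Le Cam–Birgé tests, the Hellinger radius $8\epsilon$ (versus the net radius $2\epsilon$ and the $2\epsilon$ appearing inside $N(2\epsilon,\cdot)$), and the constants $\beta,b,c$ all line up so that the exponent $4-c$ in \eqref{eq:summability1} is precisely what is needed — i.e. verifying that $4n\epsilon^2$ is the correct budget: $2n\epsilon^2$ from converting the numerator's likelihood-ratio integral against the $\edr^{-2n\epsilon^2}$ denominator slack is spent once, and another $2n\epsilon^2$ is the test's exponential power, leaving room for the $-c$ perturbation. I would present this constant-chasing carefully but briefly, citing \citet{ghos:vand:2007} for the prototype and emphasizing only the modification: that the entropy term enters through a weighted sum rather than a supremum, which is what lets \eqref{eq:summability1} tolerate a sieve whose total entropy grows faster than linearly provided the heavy-entropy pieces $\F_{n,j}$ carry correspondingly small prior mass.
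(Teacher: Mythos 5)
Your overall architecture (KL lower bound for the denominator, exponential bound on $\Pi(\F_n^c)$, per-cell tests over the partition, three-term decomposition) matches the paper's. But there is a genuine gap at the crucial step where the summability condition is supposed to enter. With the standard symmetric net-based test, the type-I error of $\varphi_{n,j}$ is bounded by $N(2\epsilon,\F_{n,j},d)\,\edr^{-4n\epsilon^2}$, so the testing term is $\sum_j N_j\,\edr^{-4n\epsilon^2}$ with the \emph{full} entropy number $N_j$, not its square root. Hypothesis \eqref{eq:summability1} only controls $\sum_j\sqrt{N_j}\sqrt{\Pi(\F_{n,j})}$, which is \emph{smaller} than $\sum_j N_j$ (since $\Pi(\F_{n,j})\le 1$ gives $\sqrt{N_j}\sqrt{\Pi(\F_{n,j})}\le N_j$), so it cannot dominate it. Your ``cleaner route'' inequality $\sqrt{N_j}\le\sqrt{N_j}/\sqrt{\Pi(\F_{n,j})}$ points the wrong way: the factor $1/\sqrt{\Pi(\F_{n,j})}$ is unbounded and nothing in the hypotheses controls it; likewise the Cauchy--Schwarz variant leaves an uncontrolled factor $\sum_j\sqrt{N_j/\Pi(\F_{n,j})}\,\edr^{-cn\epsilon^2}$. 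A concrete counterexample to your reduction: a single cell with $N_j=\edr^{5n\epsilon^2}$ and $\Pi(\F_{n,j})=\edr^{-n^2}$ satisfies \eqref{eq:summability1} but has $N_j\,\edr^{-4n\epsilon^2}\to\infty$.

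The missing idea is the asymmetric test of Corollary~1 of \citet{ghos:vand:2007}: for any $\alpha_j,\beta_j>0$ there is a test with type-I error $\le\sqrt{\beta_j/\alpha_j}\,N(2\epsilon,\F_{n,j},d)\,\edr^{-4n\epsilon^2}$ and type-II error $\le\sqrt{\alpha_j/\beta_j}\,\edr^{-4n\epsilon^2}$. Choosing $\alpha_j=N(2\epsilon,\F_{n,j},d)$ and $\beta_j=\Pi(\F_{n,j})$ deflates the type-I bound to $\sqrt{N_j}\sqrt{\Pi_j}\,\edr^{-4n\epsilon^2}$ at the price of inflating the type-II bound by $\sqrt{N_j/\Pi_j}$, which is then multiplied by $\Pi(\F_{n,j})\,\edr^{cn\epsilon^2}$ from the numerator-over-denominator bound, yielding $\sqrt{N_j}\sqrt{\Pi_j}\,\edr^{-(4-c)n\epsilon^2}$ for that term as well. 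Both contributions then sum to exactly the quantity in \eqref{eq:summability1}. Without this per-cell rebalancing of the two error types, the weighted summability condition cannot be exploited, which is precisely the modification of the classical scheme that the theorem is designed to capture. (A minor additional slip: your test exponent $K=2$ should be $4$, i.e.\ $\edr^{-n(2\epsilon)^2}$ for net radius $2\epsilon$ and separation $8\epsilon$, which is why the budget in \eqref{eq:summability1} is $4-c$.)
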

The proof is similar to that of Theorem 5 of \citet{ghos:vand:2007} and is presented in the Appendix. 
Here we state and prove the main result on posterior consistency for DP location-scale mixtures of Gaussian kernels.
%
%% Theorem
%
\begin{theorem}\label{th:consistency}
Let $f_0$ satisfy the conditions of Lemma~\ref{lemma:KLsupport}. Consider the prior $\Pi$ defined in \eqref{eq:mixingDP2} with $P^*$ that satisfies the following tail behaviors: for some positive constants $c_1, c_2, c_3$, $r>(d-1)/2$ and $\kappa>d(d-1)$,
\begin{gather}
  \label{eq:1}
  P^*(\|\mu\|>x)\lesssim x^{-2(r+1)},\\
  \label{eq:2}
  P^*(\lambda_1(\Sigma^{-1})>x)\lesssim \exp(-c_1x^{c_2}),\\
  \label{eq:3}
  P^*(\lambda_d(\Sigma^{-1})<1/x)\lesssim x^{-c_3},\\
  \label{eq:4}
  P^*(\lambda_1(\Sigma^{-1})/\lambda_d(\Sigma^{-1})>x)
  \lesssim x^{-\kappa}
\end{gather}
for all sufficiently large $x>0$. Then the posterior is consistent at $f_0$.
\end{theorem}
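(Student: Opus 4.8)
The plan is to apply Theorem~\ref{th:Gho07}. Since $f_0$ satisfies the conditions of Lemma~\ref{lemma:KLsupport}, the KL property \eqref{eq:KL} holds, so it suffices to construct a sieve $\mathcal{F}_n$, with a partition $\mathcal{F}_n=\bigcup_j\mathcal{F}_{n,j}$, satisfying the prior-complement bound \eqref{eq:complement} and the weighted summability condition \eqref{eq:summability1}. Following the stick-breaking construction of \citet{shen:etal:2012} adapted to the location-scale case (Lemma~\ref{lemma:sieve} in the Appendix), I would truncate the mixture at a finite number $H_n$ of components, restrict the locations $\theta_h$ to a ball of radius $a_n$ in $\R^d$, restrict the largest eigenvalue of $\Sigma_h^{-1}$ to lie below some $\bar\sigma_n^{-2}$, the smallest eigenvalue of $\Sigma_h^{-1}$ to lie above some $\underline\sigma_n^{-2}$, and—this is the new ingredient relative to location mixtures—restrict the condition number $\lambda_1(\Sigma_h^{-1})/\lambda_d(\Sigma_h^{-1})$ to lie below a threshold $t_n$. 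The four tail conditions \eqref{eq:1}--\eqref{eq:4} are tailored exactly so that the prior mass of the complement of each of these restrictions, summed over the (at most $H_n$) components and accounting for the geometric tail $\Pi(\sum_{h>H_n}\pi_h>\epsilon)$ of the stick-breaking weights, is bounded by $e^{-bn}$ for suitable choices of $H_n\asymp n$, $a_n$, $\bar\sigma_n$, $\underline\sigma_n$, $t_n$ all polynomial or polylogarithmic in $n$; this gives \eqref{eq:complement}.

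The core of the argument is the entropy estimate and the summability \eqref{eq:summability1}. I would partition $\mathcal{F}_n$ according to the number $N$ of ``active'' components whose weight exceeds a small threshold (and, if needed, according to discretized values of the eigenvalue bounds), so that $\mathcal{F}_{n,j}$ indexed by $j=N$ consists of mixtures of at most $N$ Gaussians with parameters in the truncated region. A standard covering-number bound for such a class—covering the simplex of weights, the location balls, and the set of covariance matrices with controlled eigenvalue range and condition number—yields $\log N(2\epsilon,\mathcal{F}_{n,j},d)\lesssim N\log(1/\epsilon)+N\,D_n$, where $D_n$ collects the (polylogarithmic in $n$) contributions of $a_n,\bar\sigma_n,\underline\sigma_n$ and, crucially, of the condition-number bound $t_n$; the number of covariance parameters per component is $O(d^2)$, so here $D_n$ carries a $d^2$ factor but remains $o(n\epsilon^2)$. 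For the prior weight, the probability that the stick-breaking process produces $N$ or more components with non-negligible mass decays super-geometrically, like $e^{-a N\log N}$ or at least $e^{-aN}$, so $\sqrt{\Pi(\mathcal{F}_{n,j})}$ decays fast in $N$. Putting these together, the general term of the sum in \eqref{eq:summability1} is bounded by $\exp\{\tfrac12 N\log(1/\epsilon)+\tfrac12 N D_n-\tfrac12 aN-(4-c)n\epsilon^2\}$ (times lower-order factors), and summing the geometric-type series over $N$ leaves a bound of order $e^{-(4-c)n\epsilon^2}\cdot(\text{subexponential in }n)\to 0$, as required.

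The main obstacle is the entropy bound for the covariance part under only a moment (power-tail) control on the condition number, as in \eqref{eq:4}. Unlike the pure exponential tails of \citet{shen:etal:2012}, a power tail forces the sieve threshold $t_n$ to grow polynomially in $n$ rather than polylogarithmically, and one must check that $\log t_n$ entering the per-component entropy is still $o(n\epsilon^2)$ while simultaneously the discarded prior mass $P^*(\text{condition number}>t_n)\lesssim t_n^{-\kappa}$ is small enough—this is precisely where the lower bound $\kappa>d(d-1)$ is used, matching the $O(d^2)$ covariance dimension against the tail exponent. A secondary delicate point is handling the weaker, merely polynomial tail on $\|\theta\|$ in \eqref{eq:1} (with $r>(d-1)/2$): the truncation radius $a_n$ then also grows polynomially, and it is exactly the summability condition \eqref{eq:summability1}—rather than the cruder linear-entropy condition of \citet{ghos:etal:1999}—that absorbs the resulting entropy growth, by trading it against the fast decay of $\sqrt{\Pi(\mathcal{F}_{n,j})}$ in $N$. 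Verifying that all these rates can be chosen consistently, so that both \eqref{eq:complement} and \eqref{eq:summability1} hold for every fixed $\epsilon>0$, is the technical heart of the proof; the remaining computations are routine and are deferred to the Appendix via Lemma~\ref{lemma:sieve}.
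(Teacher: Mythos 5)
Your overall strategy (apply Theorem~\ref{th:Gho07} via the Shen--Tokdar--Ghosal sieve plus Lemma~\ref{lemma:sieve}) is the right one, but the way you propose to handle the two power-tailed quantities --- $\|\theta\|$ and the condition number $\lambda_1(\Sigma^{-1})/\lambda_d(\Sigma^{-1})$ --- contains a genuine gap. You place hard truncations $\|\theta_h\|\leq a_n$ and $\lambda_1(\Sigma_h^{-1})/\lambda_d(\Sigma_h^{-1})\leq t_n$ \emph{inside the sieve} $\F_n$ and simultaneously ask for $\Pi(\F_n^c)\lesssim \edr^{-bn}$ with $a_n,t_n$ polynomial in $n$. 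These two requirements are incompatible: under the tails \eqref{eq:1} and \eqref{eq:4} the discarded mass is only $H_n(a_n^{-2(r+1)}+t_n^{-\kappa})$, which is polynomially, not exponentially, small. Forcing it to be $O(\edr^{-bn})$ requires $a_n,t_n$ of order $\edr^{cn}$, and then the per-component entropy terms $d\log a_n$ and $\tfrac{d(d-1)}{2}\log t_n$ are of order $n$, so the total entropy over $H_n$ components is superlinear in $n$ and \eqref{eq:summability1} fails. Your suggestion that the summability can be rescued by the decay of $\sqrt{\Pi(\F_{n,j})}$ in the number $N$ of active components does not work either: that decay is at best $\exp(-a H_n\log H_n)=\exp(-a' n\epsilon^2)$ and cannot absorb an $O(n^2)$ entropy. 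This is precisely the obstruction recorded in Remark~\ref{rem:nogoshetal}.

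The paper's resolution, which is the one idea missing from your proposal, is to put \emph{no} bound at all on $\|\theta_h\|$ or on the condition number inside $\F_n$ (only the component truncation $\sum_{h>H_n}\pi_h\leq\epsilon$ and the eigenvalue range of $\Sigma_h$, whose tails \eqref{eq:2}--\eqref{eq:3} are strong enough for \eqref{eq:complement}), and to push the power-tailed directions entirely into the \emph{partition}: $\F_n\subset\bigcup_{\bj,\bl}\F_{n,\bj,\bl}$ with shells $\sqrt{n}(j_h-1)<\|\theta_h\|\leq\sqrt{n}j_h$ and $n^{2^{l_h-1}}<\lambda_1(\Sigma_h)/\lambda_d(\Sigma_h)\leq n^{2^{l_h}}$ for each $h\leq H_n$. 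Then each shell's entropy (growing like $j_h^{(d-1)/2}$ and $n^{d(d-1)2^{l_h}/4}$ after taking square roots) is weighted by the square root of \emph{that shell's} prior mass (decaying like $(j_h-1)^{-(r+1)}$ and $n^{-2^{l_h-1}\kappa/2}$ by \eqref{eq:1} and \eqref{eq:4}), and the double sum over $\bj,\bl$ converges exactly when $r>(d-1)/2$ and $\kappa>d(d-1)$ --- this is where those thresholds enter, not through a dimension-versus-truncation balance as you suggest. A secondary point: the paper takes $H_n=\lfloor Cn\epsilon^2/\log n\rfloor$, not $H_n\asymp n$; with $H_n\asymp n$ the term $dH_n\log M_n\asymp dn\log n$ in the entropy bound of Lemma~\ref{lemma:sieve} would already be superlinear.
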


Condition \eqref{eq:1} is weaker than the usual exponential tail condition in \citet{tokd:2006} (univariate case) and in \citet{shen:etal:2012} (location mixture case). 
The distinctive condition for multivariate location-scale mixtures turns out to be \eqref{eq:4} as explained in Remark~\ref{rem:nogoshetal}. 
It corresponds to the existence of the $d(d-1)+1$ moment of the quantity
  $\lambda_1(\Sigma^{-1})/\lambda_d(\Sigma^{-1})$
which is known in random matrix theory as condition number of $\Sigma^{-1}$. See \citet{edelman:sutton:2005}  for a recent contribution on the topic. In particular, the condition number plays an important role in electronical engineering problems in the context of multiple-input multiple-output communication systems. See \citet{matthaiou:etal:2010} for related results.
\begin{proof}[Proof of Theorem \ref{th:consistency}]
The proof is an application of Theorem \ref{th:Gho07} and is based on the entropy upper bounds of Lemma \ref{lemma:sieve} in the Appendix. Let $M_n=\underline\sigma_n^{-2c_2}=n$ and $H_n=\lfloor Cn\epsilon^2/\log n\rfloor$ for a positive constant $C$ to be determined later. Also, let 
$\bj=(j_1,\ldots,j_{H_n})$, $j_h\in\N^{*}$, and 
$\bl=(l_1,\ldots,l_{H_n})$, $l_h\in\N$. Define
\begin{align*}
  \F_n
  &=\{f_P\mbox{ with }P=\Small{\sum}_{h\geq 1}
  \pi_h\delta_{(\mu_h,\Sigma_h)}:\ 
  \Small{\sum}_{h>H_n}\pi_h\leq \epsilon;\\
  &\qquad\qquad\qquad\qquad\quad\;\;\, 
  \underline\sigma_n^2\leq\lambda_d(\Sigma_h),\lambda_1(\Sigma_h)\leq 
  \underline\sigma_n^2\left(1+\epsilon/\sqrt d\right)^{M_n},
  \mbox{for } h\leq H_n\}\\
  \F_{n,\bj,\bl}
  &=\left\{f_P\in\F_n:\
  \sqrt{n}(j_h-1)<\|\mu_h\|\leq \sqrt{n}j_h,\
  n^{2^{l_h-1}\Indic_{(l_h\geq 1)}} 
  <\Small{\frac{\lambda_1(\Sigma_h)}{\lambda_d(\Sigma_h)}}
  \leq n^{2^{l_h}}, 
  \mbox{ for }h\leq H_n\right\}
\end{align*}
\begin{comment}
where
  $$\begin{cases}
  l_k=1,u_k=n&\mbox{for }k=0;\\
  l_k=n^{2^{k-1}},u_k=n^{2^k}&\mbox{for }k\geq 1.
  \end{cases}$$
  $$l_k=n^{2^{k-1}\Indic_{(k\geq 1)}},\quad
  u_k=n^{2^k}.$$
\end{comment}  
so that $\F_n\uparrow \F$ as $n\to\infty$ and $\F_n\subset\bigcup_{\bj,\bl}\F_{n,\bj,\bl}$. 

As for \eqref{eq:complement} of Theorem \ref{th:Gho07}, note that
  $$\Pi(\F_n^c)
  \leq
  \Prob\left\{\Small{\sum}_{h>H_n}\pi_h>\epsilon\right\}
  +H_n\big[
  P^*(\lambda_d(\Sigma)<\underline\sigma_n^2)
  +P^*(\lambda_1(\Sigma)>\underline\sigma_n^2(1+\epsilon/\sqrt d)^{M_n})
  \big].$$  
Use the stick-breaking representation of the DP for the first term (see the proof of Proposition 2 of \citet{shen:etal:2012}), \eqref{eq:2} and \eqref{eq:3} to get
\begin{equation}\label{eq:prior_complement}
  \Pi(\F_n^c)\lesssim
  \left\{\frac{\edr\alpha}{H_n}\log\frac{1}{\epsilon}\right\}^{H_n}
  +H_n\left[\edr^{-c_1\underline\sigma_n^{-2c_2}}
  +\underline\sigma_n^{-2c_3}\left(1+\frac{\epsilon}
  {\sqrt d}\right)^{-c_3M_n}\right],
\end{equation}  
so that
\begin{align*}
  \Pi({\cal F}_n^c)
  &\lesssim (Cn\epsilon^2/\log n)^{-Cn\epsilon^2/\log n}
  +(Cn\epsilon^2/\log n)\left[\edr^{-c_1 n}
  +n^{c_3/c_2}(1+\epsilon/\sqrt d)^{-c_3n}\right]\\
  &\lesssim \exp\{-(Cn\epsilon^2/\log n)\log(Cn\epsilon^2/\log n)\}
  +\edr^{-c_1n}+\exp\{-c_3n\log(1+\epsilon/\sqrt d)\}.
\end{align*}
Note that $(Cn\epsilon^2/\log n)\log(Cn\epsilon^2/\log n)>Cn\epsilon^2/2$ for large enough $n$, therefore
  $$\Pi({\cal F}_n^c)
  \lesssim \edr^{-Cn\epsilon^2/2}+\edr^{-c_1n}
  +\edr^{-c_3\log(1+\epsilon/\sqrt d)n}
  \lesssim \edr^{-bn}$$
for $0<b<\min\{C\epsilon^2/2,c_1,c_3\log(1+\epsilon/\sqrt d)\}$. Hence \eqref{eq:complement} is satisfied.

We next show that $\F_{n,\bj,\bl}$ satisfies the summability condition \eqref{eq:summability1} of Theorem \ref{th:Gho07}. By an application of Lemma \eqref{lemma:sieve} in the Appendix,
\begin{multline*}
  N(\epsilon,\F_{n,\bj,\bl},\|\cdot\|_1)
  \lesssim\exp\left\{
  dH_n\log M_n+H_n\log\Small{\frac{C_1}{\epsilon}}
  \phantom{\left(
  \Small{\frac{\sqrt{n}j_h}{\underline\sigma_n\epsilon/2}+1}
  \right)^d}
  \right.\\
  \left.+\Small{\sum_{h\leq H_n}
  \log\left[\left(\frac{\sqrt{n}j_h}{\underline\sigma_n\epsilon/2}+1\right)^d
  -\left(\frac{\sqrt{n}(j_h-1)}{\underline\sigma_n\epsilon/2}-1\right)^d}\right]
  +\Small{\frac{d(d-1)}{2}}
  \log\Small{\frac{2d\,n^{2^{l_h}}}{\epsilon^2}}
  \right\}
\end{multline*}
for some positive $C_1$. Let $c_4=1/2+1/(2c_2)$ and note that
\begin{small}
\begin{align*}
\left(\frac{\sqrt{n}j_h}{\underline\sigma_n\epsilon/2}+1\right)^d
  -\left(\frac{\sqrt{n}(j_h-1)}{\underline\sigma_n\epsilon/2}-1\right)^d
  &=\left(\frac{2n^{c_4}j_h}{\epsilon}+1\right)^d
  -\left(\frac{2n^{c_4}j_h}{\epsilon}+1
  -\frac{2n^{c_4}}{\epsilon}-2\right)^d\\
  &\lesssim
  d\left(\frac{2n^{c_4}}{\epsilon}+2\right)
  \left(\frac{2n^{c_4}j_h}{\epsilon}+1\right)^{d-1}
  \lesssim \frac{n^{{c_4}d}j_h^{d-1}}{\epsilon^d}
\end{align*}
\end{small}
where inequality sign \comillas{$\lesssim$} is for both large $n$ and $j_h$. Hence we have
\begin{equation}\label{eq:entr_shell}
  \begin{split}
  N(\epsilon,\F_{n,\bj,\bl},\|\cdot\|_1)
  \lesssim \exp\Big\{
  dH_n\,\log n
  &+H_n\log\Small{\frac{C_1}{\epsilon}}\\
  &+\Small{\sum_{h\leq H_n}}
  \log\Small{\frac{n^{c_4d}j_h^{d-1}}{\epsilon^{d}}}
  +\Small{\frac{d(d-1)}{2}}
  \log\Small{\frac{2d\,n^{2^{l_h}}}{\epsilon^2}}\Big\}
  \end{split}
\end{equation}  
Moreover, by using tail conditions \eqref{eq:1} and \eqref{eq:4},
\begin{align}
  \Pi(\F_{n,\bj,\bl})
  &\leq\Small{\prod_{h\leq H_n}}
  P^*(\|\mu\|>\sqrt{n}(j_h-1),
  \lambda_1(\Sigma)/\lambda_d(\Sigma)>
  n^{2^{l_h-1}\Indic_{(l_h\geq 1)}})\nonumber\\
  \label{eq:prior_shell}
  &\lesssim\Small{\prod_{h\leq H_n}}
  [\sqrt{n}(j_h-1)]^{-\Indic_{(j_h\geq 2)}2(r+1)}
  n^{-\Indic_{(l_h\geq 1)}2^{l_h-1}\kappa}
\end{align}  
with the convention $0^0=1$.
A combination of \eqref{eq:entr_shell},  \eqref{eq:prior_shell} and 
$d^2(f,g)\leq\|f-g\|_1$ imply that 
  $\sqrt{N(2\epsilon,\F_{n,\bj,\bl},d)}
  \sqrt{\Pi(\F_{n,\bj,\bl})}$
is bounded by a multiple of
\begin{equation}\label{eq:bound}
  \Small{
  \exp\left\{\frac{d+c_4d}{2}Cn\epsilon^2\right\}
  \prod_{h\leq H_n}}
  j_h^{\frac{d-1}{2}}[\sqrt n(j_h-1)]^{-\Indic_{(j_h\geq 2)}(r+1)}\
  n^{\frac{d(d-1)}{4}2^{l_h}}
  n^{-\Indic_{(l_h\geq 1)}2^{l_h-1}\frac{\kappa}{2}
  }
\end{equation}  
By hypothesis, $r>(d-1)/2$ so that
  $K:=\sum_{j\geq2}j^{(d-1)/2}(j-1)^{-(r+1)}<\infty$. Hence by summing \eqref{eq:bound} with respect to $\bj$ we get
\begin{align}\label{eq:bound_2}
  &\Small{
  \exp\left\{\frac{d+c_4d}{2}Cn\epsilon^2\right\}
  (1+n^{-\frac{r+1}{2}}K)^{H_n}
  \prod_{h\leq H_n}}
  n^{\frac{d(d-1)}{4}2^{l_h}}
  n^{-\Indic_{(l_h\geq 1)}2^{l_h-1}\frac{\kappa}{2}
  }
\end{align}
Moreover, note that
\begin{align*}
  \sum_{l\geq0}
  n^{\frac{d(d-1)}{4}2^{l}}
  n^{-\Indic_{(l\geq 1)}2^{l-1}\frac{\kappa}{2}}
  &=n^{\frac{d(d-1)}{4}}
  +\sum_{l\geq 1}
  \exp\left\{-\frac{d(d-1)}{2}\log n
  \left(\frac{\kappa}{d(d-1)}-1\right)2^{l-1}\right\}
  \\
  &\leq 2n^{\frac{d(d-1)}{4}}
\end{align*}
for $n$ large enough since, by hypothesis, $\kappa/[d(d-1)]-1>0$. 
Hence, by summing \eqref{eq:bound_2} with respect to $\bl$ we get the upper bound
\begin{align*}
  &\exp\left\{\frac{d+c_4d}{2}Cn\epsilon^2\right\}
  (1+n^{-\frac{r+1}{2}}K)^{H_n}
  \left[2n^{\frac{d(d-1)}{4}}
  \right]^{H_n}\\
  &\leq
  \exp\left\{\frac{d+c_4d}{2}Cn\epsilon^2\right\}
  4^{H_n} 
  \exp\left\{Cn\epsilon^2\frac{d(d-1)}{4}\right\}
  \lesssim
  \exp\left\{\frac{1}{2}\left[d+c_4d+\frac{d(d-1)}{2}\right]
  C\epsilon^2n\right\}
\end{align*}  
where in the first inequality we have used $n^{-\frac{r+1}{2}}K\leq 1$ for $n$ large enough. By taking $C$ sufficiently small to satisfy
  $C<2(4-c)/[d+c_4d+d(d-1)/2]$
for some $c>0$, \eqref{eq:summability1} is satisfied and the proof is complete.
\end{proof}

%============================================================================
\section{Illustration}
\label{sec:examples}

Theorem \ref{th:consistency} holds for DP location-scale mixtures of multivariate Gaussian kernels with minimal requirements on the prior parameter $P^*$. The power tail decay \eqref{eq:1} for $\mu$ is indeed important as it covers the prior specification of \citet{mull:etal:1996}:
  \[
  P^*(\mu,\Sigma)=N(\mu; m, B)\, IW(\Sigma;\Sigma_0,\nu),
  \]
where an additional hyperprior on $B$ is given by $IW(B; B_0,\nu_B)$, with $IW(B_0,\nu_B)$ denoting the inverse-Wishart distribution with scale parameter $B_0$ and $\nu_B$ degrees of freedom. In this case $\mu$ under $P^*$ has multivariate Student's $t$-distribution with $\nu_B$ degree of freedom so that $P^*(\|\mu\|^2>x)\sim x^{-(\nu_B-d+1)/2}$  for $x$ large enough. Hence $\nu_B>2d$ is sufficient for \eqref{eq:1} to hold with $r>(d-1)/2$.

In what follows, we focus  on the prior specification of the scale parameter $\Sigma$ and show that conditions \eqref{eq:2}-\eqref{eq:4} are verified for some important choices which make a substantial practical difference in applications, particularly in high-dimensional settings.  
Henceforth, we refer to the prior specification for $\Sigma$ as $\Sigma\sim \mathcal{L}$ meaning that $\mathcal L$ is the marginal of $\Sigma$ with respect to the prior mean $P^*$. All proofs are reported in the Appendix.

The first result is about $\mathcal L$ being the inverse-Wishart distribution $IW(\Sigma_0,\nu)$. While conditions \eqref{eq:2} and \eqref{eq:3} are always satisfied, see e.g. the proof of Lemma 1 of \citet{shen:etal:2012}, $\nu$ needs to be sufficiently large for tail condition \eqref{eq:4} on the condition number to hold. Corollary \ref{cor:consistencywhishart} makes this statement precise.
%
%%  COROLLARY
%
\begin{corollary}
Assume $f_0$ satisfies the conditions of Lemma~\ref{lemma:KLsupport}. 
Consider a prior $\Pi$ induced by \eqref{eq:mixingDP2} with $P^*$ satisfying \eqref{eq:1} and $\mathcal{L} = IW(\Sigma_0,\nu)$ with $\nu > 2d(d-1)+d-1$. 
Then conditions \eqref{eq:2}-\eqref{eq:4} are satisfied and the posterior is consistent at $f_0$.
\label{cor:consistencywhishart}
\end{corollary}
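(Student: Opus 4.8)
The plan is to verify that the inverse-Wishart law $\mathcal{L}=IW(\Sigma_0,\nu)$ satisfies the three tail conditions \eqref{eq:2}, \eqref{eq:3}, \eqref{eq:4} with constants in the admissible ranges, and then invoke Theorem~\ref{th:consistency} directly. Conditions \eqref{eq:2} and \eqref{eq:3} are the easy part: since $\Sigma\sim IW(\Sigma_0,\nu)$ means $\Sigma^{-1}\sim \text{Wishart}(\Sigma_0^{-1},\nu)$, the largest eigenvalue $\lambda_1(\Sigma^{-1})$ is bounded by the trace of a Wishart matrix, whose moment generating function is finite in a neighborhood of zero, giving a sub-exponential (indeed, with $c_2$ that can be taken as $1$) tail of the form \eqref{eq:2}; and the smallest eigenvalue $\lambda_d(\Sigma^{-1})$ is bounded below by a scalar multiple of a $\chi^2_{\nu-d+1}$ variate (e.g. via the Bartlett decomposition), whose density near $0$ behaves like $x^{(\nu-d-1)/2}$, so $P^*(\lambda_d(\Sigma^{-1})<1/x)\lesssim x^{-(\nu-d+1)/2}$, which is \eqref{eq:3} with $c_3=(\nu-d+1)/2>0$. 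These are exactly the computations carried out in the proof of Lemma~1 of \citet{shen:etal:2012}, which I would simply cite.

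The substantive step is condition \eqref{eq:4} on the condition number $\lambda_1(\Sigma^{-1})/\lambda_d(\Sigma^{-1})$, which is where the hypothesis $\nu>2d(d-1)+d-1$ enters. I would show that the condition number of a $\text{Wishart}(\Sigma_0^{-1},\nu)$ matrix has a polynomial tail $x^{-\kappa}$ with $\kappa=(\nu-d+1)/2$, and then check that $\nu>2d(d-1)+d-1$ is precisely equivalent to $\kappa>d(d-1)$, as required by Theorem~\ref{th:consistency}. To get the tail exponent, I would combine the two one-sided bounds already used: $P^*(\lambda_1(\Sigma^{-1})>x)$ decays faster than any polynomial, so the tail of the ratio is governed by $\lambda_d(\Sigma^{-1})$ being small. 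Concretely, for the event $\{\lambda_1/\lambda_d>x\}$, split on whether $\lambda_1$ exceeds some slowly growing threshold like $\log x$: on $\{\lambda_1\le\log x\}$ the ratio exceeding $x$ forces $\lambda_d<\log x/x$, which has probability $\lesssim(\log x/x)^{(\nu-d+1)/2}$, absorbing the logarithmic factor into a slightly smaller exponent is harmless since we only need a polynomial bound of a fixed order; on $\{\lambda_1>\log x\}$ we use the sub-exponential bound \eqref{eq:2}, which is $o(x^{-\kappa})$ for any $\kappa$. Alternatively, and perhaps more cleanly, one can use scale invariance: the condition number of $\Sigma^{-1}$ equals that of $\Sigma$, and a standard fact (see e.g. \citet{edelman:sutton:2005}) is that the smallest eigenvalue of a Wishart matrix with $\nu$ degrees of freedom in dimension $d$ has a density vanishing like $x^{(\nu-d-1)/2}$ at the origin while the condition number's tail is controlled by this same exponent, giving $\kappa=(\nu-d+1)/2$.

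The main obstacle I anticipate is pinning down the exact polynomial order $\kappa$ in \eqref{eq:4}; a crude bound via Markov on a single moment of the condition number is not obviously tight enough, because a naive estimate $E[(\lambda_1/\lambda_d)^p]<\infty$ only for $p<(\nu-d+1)/2$ might lose a constant and one must be careful that the logarithmic factor from the splitting argument does not eat into the exponent below $d(d-1)$. The clean way around this is to argue that for \emph{any} $\kappa<(\nu-d+1)/2$ the tail bound \eqref{eq:4} holds (the log factor is then absorbed), and since $\nu>2d(d-1)+d-1$ gives $(\nu-d+1)/2>d(d-1)$ with strict inequality, there is room to choose such a $\kappa$ still exceeding $d(d-1)$. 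Once \eqref{eq:2}--\eqref{eq:4} are established with $r$ coming from the hypothesis on $P^*$ (which is assumed to satisfy \eqref{eq:1} directly), the conclusion follows immediately from Theorem~\ref{th:consistency}. I would present the eigenvalue tail estimates as a short lemma or fold them into the corollary's proof, deferring the Wishart computations to citations of \citet{shen:etal:2012} wherever possible and only spelling out the new condition-number bound.
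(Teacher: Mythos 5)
Your treatment of conditions \eqref{eq:2} and \eqref{eq:3} matches the paper (both simply invoke Lemma~1 of \citet{shen:etal:2012}), and your identification of the target exponent $\kappa=(\nu-d+1)/2$ together with the equivalence $\kappa>d(d-1)\iff\nu>2d(d-1)+d-1$ is exactly what the paper needs. Where you genuinely diverge is in how \eqref{eq:4} is established. The paper computes the marginal density of $z=\lambda_1/\lambda_d$ directly: it starts from the joint density of the ordered eigenvalues of the Wishart matrix (Corollary 3.2.19 of \citet{muirhead:1982}), changes variables $(\lambda_1,\ldots,\lambda_d)\mapsto(z,\lambda_2,\ldots,\lambda_d)$, integrates out the nuisance eigenvalues after bounding the Vandermonde factors, and obtains $f_z(z)\lesssim z^{-(\nu-d+3)/2}$, hence the exact tail $P^*(z>x)\lesssim x^{-(\nu-d+1)/2}$ (cross-checked against \citet{edelman:sutton:2005}). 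You instead propose to combine the two one-sided tails via a splitting of the event $\{\lambda_1/\lambda_d>x\}$. This is more elementary and avoids the multivariate integral, at the price of (i) losing an arbitrarily small amount in the exponent, which you correctly observe is harmless because the hypothesis gives strict inequality, and (ii) requiring the \emph{exact} polynomial order $(\nu-d+1)/2$ in \eqref{eq:3}, which Lemma~1 of \citet{shen:etal:2012} states only as ``some $c_3>0$''; you would have to extract that order yourself, which brings back a (one-dimensional, admittedly easier) version of the eigenvalue-density computation the paper performs.

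One concrete slip: your threshold $\log x$ in the splitting argument is too small. For the Wishart, the bound \eqref{eq:2} holds with $c_2=1$ (the trace is a weighted sum of chi-squared variables), so $P^*(\lambda_1(\Sigma^{-1})>\log x)\lesssim \edr^{-c_1\log x}=x^{-c_1}$, which is only polynomial of order $c_1$ and is \emph{not} $o(x^{-\kappa})$ for every $\kappa$; nothing guarantees $c_1>d(d-1)$. The fix is routine: take the threshold to be $x^{\delta}$ for small $\delta>0$ (or $(\log x)^{2/c_2}$), so that the $\lambda_1$-term decays faster than any polynomial while the $\lambda_d$-term contributes $x^{-(1-\delta)(\nu-d+1)/2}$, which still exceeds $d(d-1)$ in exponent for $\delta$ small by the strict inequality in the hypothesis. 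With that correction, and with the exact exponent in \eqref{eq:3} supplied, your argument goes through and yields the corollary by direct appeal to Theorem~\ref{th:consistency}.
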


There is a rich literature providing alternatives to the inverse-Wishart prior when the dimension of the data is large. For example a commonly used and successful approach consists on analytic factorizations \citep{west:2003,carv:etal:2008} where
\begin{equation}\label{eq:Sigmaprior}
  \Sigma = \Gamma \Gamma^T + \Omega, \ 
  \Gamma \sim \mathcal{L}_\Gamma, \; \Omega \sim \mathcal{L}_\Omega, 
\end{equation}
where $\Gamma$ is a $d \times r $ matrix with $r < d$, independent from the $d \times d$ diagonal matrix $\Omega$. 
Let $\gamma_{jh}$ be the $(j,h)$-th element of $\Gamma$ (factor loading) and $\sigma_j^2$ be the $j$-th diagonal element of $\Omega$ (residual variance). The specification of $\mathcal{L}_{\Gamma}$ and $\mathcal{L}_{\Omega}$ corresponds then to a distribution for $\gamma_{jh}$ and $\sigma^2_j$. 
The next corollary addresses the case of normal factor loadings with inverse gamma distributed residual variances. It turns out that conditions \eqref{eq:2}-\eqref{eq:3} are automatically satisfied, while a constraint on the shape parameter of the inverse gamma prior is needed for the verification of condition \eqref{eq:4}.
%
%%  COROLLARY
%
\begin{corollary}
Assume $f_0$ satisfies the conditions of Lemma~\ref{lemma:KLsupport}. 
Consider a prior $\Pi$ induced by \eqref{eq:mixingDP2} with $P^*$ satisfying \eqref{eq:1} and  $\mathcal{L}$ induced by \eqref{eq:Sigmaprior}. 
Assume that  $\gamma_{ij} \sim \mbox{iid }N(0,1$) and $\sigma_j^{-2} \sim\mbox{iid } \text{Ga}(a,b)$ with $a>d(d-1)$. 
Then conditions \eqref{eq:2}-\eqref{eq:4} are satisfied and the posterior is consistent at $f_0$.
\label{cor:consistencyfactor}
\end{corollary}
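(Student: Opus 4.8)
The plan is to verify conditions \eqref{eq:2}, \eqref{eq:3}, and \eqref{eq:4} of Theorem~\ref{th:consistency} for the factor-model prior \eqref{eq:Sigmaprior} with $\gamma_{ij}\sim\text{iid }N(0,1)$ and $\sigma_j^{-2}\sim\text{iid Ga}(a,b)$, since once these are in place the claimed posterior consistency follows immediately from Theorem~\ref{th:consistency} (the KL/prior support requirements are handled by Lemma~\ref{lemma:KLsupport}). The crucial observation, used throughout, is that $\Sigma=\Gamma\Gamma^T+\Omega$ with $\Omega$ positive definite, so by Weyl's inequalities $\lambda_d(\Omega)\le\lambda_d(\Sigma)$ and $\lambda_1(\Sigma)\le\lambda_1(\Gamma\Gamma^T)+\lambda_1(\Omega)$. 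Equivalently, for the precision matrix, $\lambda_1(\Sigma^{-1})=1/\lambda_d(\Sigma)\le 1/\lambda_d(\Omega)=\max_j\sigma_j^{-2}$ and $\lambda_d(\Sigma^{-1})=1/\lambda_1(\Sigma)\ge 1/(\lambda_1(\Gamma\Gamma^T)+\max_j\sigma_j^2)$.

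First I would dispatch \eqref{eq:2}: since $\lambda_1(\Sigma^{-1})\le\max_{j\le d}\sigma_j^{-2}$, a union bound gives $P^*(\lambda_1(\Sigma^{-1})>x)\le d\,P(\sigma_1^{-2}>x)$, and the right tail of a $\text{Ga}(a,b)$ variable decays like $\exp(-bx)$ up to polynomial factors, which yields \eqref{eq:2} with $c_2=1$ (any constant $c_1<b$). Next, \eqref{eq:3}: here $\lambda_d(\Sigma^{-1})\ge 1/(\lambda_1(\Gamma\Gamma^T)+\max_j\sigma_j^2)$, so $P^*(\lambda_d(\Sigma^{-1})<1/x)\le P(\lambda_1(\Gamma\Gamma^T)+\max_j\sigma_j^2>x)\le P(\lambda_1(\Gamma\Gamma^T)>x/2)+d\,P(\sigma_1^2>x/(2d))$. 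The term in $\sigma_j^2$ is controlled because the left tail of $\sigma_1^{-2}\sim\text{Ga}(a,b)$ near $0$ behaves like $x^{a}$, so $P(\sigma_1^2>t)=P(\sigma_1^{-2}<1/t)\lesssim t^{-a}$; the term in $\lambda_1(\Gamma\Gamma^T)$ is controlled since $\lambda_1(\Gamma\Gamma^T)\le\|\Gamma\|_F^2=\sum_{j,h}\gamma_{jh}^2$, a $\chi^2_{dr}$ variable with at worst polynomial (indeed exponential) tail. Combining, \eqref{eq:3} holds with $c_3$ that can be taken as $a$ (or slightly below it).

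The main obstacle is \eqref{eq:4}, the condition number tail, which is where the constraint $a>d(d-1)$ enters. Writing $\lambda_1(\Sigma^{-1})/\lambda_d(\Sigma^{-1})=\lambda_1(\Sigma)/\lambda_d(\Sigma)\le (\lambda_1(\Gamma\Gamma^T)+\max_j\sigma_j^2)/\min_j\sigma_j^2$, I would split the event $\{$condition number $>x\}$ according to whether the numerator is large or the denominator $\min_j\sigma_j^2$ is small. The numerator has light (exponential-type) tails by the $\chi^2$ and Gamma bounds above, so its contribution is negligible; the binding term is $P(\min_{j\le d}\sigma_j^2 < \delta)$ for small $\delta$, i.e.\ $P(\max_{j\le d}\sigma_j^{-2}>1/\delta)$, which by a union bound is at most $d\,P(\sigma_1^{-2}>1/\delta)\lesssim d\,\delta^{a}$ — wait, that alone is not enough. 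The point is subtler: one needs a bound on $P\big(\max_j\sigma_j^2/\min_j\sigma_j^2 > x\big)$, i.e.\ on the ratio of the order statistics of $d$ iid inverse-Gamma variables. Conditioning on the value of $\min_j\sigma_j^2=s$ (equivalently the maximum of the precisions) and integrating, the ratio exceeds $x$ only if some other $\sigma_k^2>xs$; since the left tail exponent of each $\sigma_j^{-2}$ is $a$ and there are of order $d(d-1)$ ordered pairs to control, a careful computation of $\int_0^\infty P(\exists k: \sigma_k^{2}>xs)\,dP(\min_j\sigma_j^2\le s)$ produces a tail of order $x^{-a}$ times a combinatorial factor, giving \eqref{eq:4} with $\kappa$ slightly below $a$; the hypothesis $a>d(d-1)$ then secures $\kappa>d(d-1)$ as required. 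I expect the bookkeeping of this order-statistics ratio bound — making sure the exponent is genuinely $a$ and not degraded by the $d$-fold maximum/minimum — to be the technical heart of the proof, with everything else reducing to standard Gamma and $\chi^2$ tail estimates.
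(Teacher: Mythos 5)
Your verification of \eqref{eq:2}--\eqref{eq:4} is correct and rests on the same decomposition as the paper's proof: Weyl's inequalities reduce everything to $\Omega$ and $\Gamma\Gamma^T$, the condition number of $\Sigma$ is bounded by $\lambda_1(\Omega^{-1})/\lambda_d(\Omega^{-1})+\mathrm{tr}(\Gamma\Gamma^T)/\lambda_d(\Omega)$, the cross term is a product of independent variables with all moments finite, and the binding term is the max/min ratio of the $d$ iid $\mathrm{Ga}(a,b)$ precisions, whose tail exponent $a$ is exactly what the hypothesis $a>d(d-1)$ is for. The one place where your route genuinely differs is the treatment of that ratio: the paper applies Markov's inequality with a $k$-th moment for $k\in(d(d-1),a)$ and computes the density of $z=\lambda_1(\Omega^{-1})/\lambda_d(\Omega^{-1})$ explicitly via a Jacobian change of variables (mirroring its Wishart computation), obtaining $f_z(z)\lesssim z^{a-1}/(1+z)^{2a}$ and hence finite $k$-th moment for $k<a$; you instead bound $P(Y_i/Y_j>x)\lesssim x^{-a}$ directly by conditioning and a union bound over the $d(d-1)$ ordered pairs, which is shorter and gives the tail bound $x^{-a}$ without passing through moments. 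Both are valid; your argument is arguably more elementary. Two small blemishes that do not affect the conclusion: the intermediate line asserting $P(\sigma_1^{-2}>1/\delta)\lesssim\delta^{a}$ confuses the right Gamma tail (which is exponential) with the left tail near zero (which is where the $\delta^a$ comes from) --- you correctly abandon that line, but it should be deleted rather than left as a false start; and for \eqref{eq:3} the paper settles for $c_3=1$ via Markov's inequality with the first moment, whereas your left-tail computation yields the sharper $c_3=a$, either of which suffices since \eqref{eq:3} only requires some positive $c_3$.
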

%
%% Remark
%
\begin{remark}\label{remarkanirban}
Motivated by the need of a method that scales for increasing dimension, \citet{bhat:duns:2011} introduce a sparse Bayesian factor model for the estimation of high-dimensional covariance matrices according to \eqref{eq:Sigmaprior}. The model  consists of a multiplicative gamma process shrinkage prior on the factor loadings, i.e.
\begin{align*}
	& \gamma_{jh}|\phi_{jh} \tau_h \sim N(0,\phi_{jh}^{-1} \tau_h^{-1}), \,\,
	\phi_{jh}\sim\text{Ga}(3/2,3/2), \,\, \tau_h = \prod_{l=1}^h \delta_l,\\
	& \delta_1 \sim \text{Ga}(a_1,1), \,\, 
	\delta_l \sim \text{Ga}(a_2,1), l>1, \,\, 
	\sigma_j^{-2} \sim \text{Ga}(a,b), j = 1, \dots, d.
\end{align*}
Using similar arguments to Corollary~\ref{cor:consistencyfactor} it can be proved that conditions \eqref{eq:2}--\eqref{eq:4} are satisfied for $a>d(d-1)$.
\end{remark}
One can easily build $\cal L$ which satisfy \eqref{eq:2}--\eqref{eq:4} by modeling directly the distribution of the eigenvalues. The idea is to use  the spectral decomposition of $\Sigma$:
\begin{equation}\label{eq:spectralprior} 
  \Sigma= O \Lambda O^T, \ 
  O \sim \mathcal{L}_O, \; \Lambda \sim \mathcal{L}_\Lambda, \; 
\end{equation}  
where $O$ is a $d\times d$ orthogonal matrix independent from  $\Lambda=\diag(\lambda_1, \dots, \lambda_d)$ with $\lambda_i>0$.
It is clear that the verification of \eqref{eq:2}--\eqref{eq:4} involves only the distribution $\mathcal{L}_\Lambda$.
Motivated by high dimensional sparse random matrices modeling, \citet{cron:west:2012} propose a similar approach. The authors set $O_{i,j}$ to be the rotation matrix for the rotator angle $\omega_{i,j}$ and $O = \prod_{i<j} O_{i,j}(\omega_{i,j})$ and define prior on $O$ throught a prior on the rotator angles $\omega$ which naturally accommodates sparsity, namely
\begin{align*}
	& p(\omega) = \beta_{\pi/2}\I(\omega = \pi/2) + 
	(1-\beta_{\pi/2})\beta_0 \I(\omega=0) + 
	(1-\beta_{\pi/2})(1-\beta_0) p_c(\omega),\\
	& p_c(\omega) = c(\kappa) \exp\{\kappa \cos^2\omega\}
	\I(|\omega| < \pi/2).  
\end{align*}
This class of models is particularly useful to induce sparsity without the assumption of a reduced dimensional latent factor and  hence can be appealing in many practical situations. The next corollary discusses sufficient conditions on $\mathcal{L}_\Lambda$ to obtain posterior consistency under formulation \eqref{eq:spectralprior}.
%
%% COROLLARY
%
\begin{corollary}
Assume $f_0$ satisfies the conditions of Lemma~\ref{lemma:KLsupport}. 
Consider a prior $\Pi$ induced by \eqref{eq:mixingDP2} with $P^*$ satisfying \eqref{eq:1} and  $\mathcal{L}$ following  \eqref{eq:spectralprior} with $\lambda_i^{-1} \sim \mbox{Ga}(a,b)$, $a > d(d-1)$. 
Then conditions \eqref{eq:2}-\eqref{eq:4} are satisfied and the posterior is consistent at $f_0$.
\label{remarkandrew}
\end{corollary}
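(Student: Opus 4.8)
The plan is to verify the tail conditions \eqref{eq:2}--\eqref{eq:4} on $P^*$ directly from the law $\mathcal{L}_\Lambda$; once these hold, consistency follows from Theorem~\ref{th:consistency}, since $f_0$ meets the requirements of Lemma~\ref{lemma:KLsupport} and $P^*$ satisfies \eqref{eq:1} by assumption. The key reduction is that the orthogonal factor $O$ plays no role: the eigenvalues of $\Sigma=O\Lambda O^T$ are $\lambda_1,\dots,\lambda_d$, so the eigenvalues of $\Sigma^{-1}$ are $Y_i:=\lambda_i^{-1}$, $i=1,\dots,d$, which are i.i.d.\ $\mathrm{Ga}(a,b)$ under $P^*$. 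Hence $\lambda_1(\Sigma^{-1})=\max_i Y_i$ and $\lambda_d(\Sigma^{-1})=\min_i Y_i$, and all three conditions become tail bounds for the maximum, the minimum, and the ratio $\max_i Y_i/\min_i Y_i$ of $d$ i.i.d.\ gamma variables --- precisely the computation already used for the residual variances in Corollary~\ref{cor:consistencyfactor}, which I would either invoke or reproduce in a couple of lines.

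For \eqref{eq:2}, a union bound gives $P^*(\max_i Y_i>x)\le d\,\Prob(Y_1>x)$, and the right tail of the $\mathrm{Ga}(a,b)$ density decays like $x^{a-1}\edr^{-bx}\lesssim\edr^{-(b/2)x}$ for large $x$; thus \eqref{eq:2} holds with $c_2=1$ and any $c_1<b/2$. For \eqref{eq:3}, again $P^*(\min_i Y_i<1/x)\le d\,\Prob(Y_1<1/x)$, and since the gamma density is $O(t^{a-1})$ near the origin, $\Prob(Y_1<1/x)\lesssim x^{-a}$, so \eqref{eq:3} holds with $c_3=a$.

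The crux is \eqref{eq:4}. Since $\{\max_i Y_i/\min_i Y_i>x\}\subseteq\bigcup_{i\ne j}\{Y_i/Y_j>x\}$, a union bound yields $P^*(\max_i Y_i/\min_i Y_i>x)\le d(d-1)\,\Prob(Y_1/Y_2>x)$. For $Y_1,Y_2$ i.i.d.\ $\mathrm{Ga}(a,b)$, the ratio $W=Y_1/Y_2$ has an inverted-beta (Beta-prime) law: $Y_1/(Y_1+Y_2)\sim\mathrm{Beta}(a,a)$, so $W=Z/(1-Z)$ with $Z\sim\mathrm{Beta}(a,a)$, and $\Prob(W>x)=\Prob\{1-Z<1/(1+x)\}\lesssim(1+x)^{-a}$ because the $\mathrm{Beta}(a,a)$ density behaves like $(1-z)^{a-1}$ near $1$. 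Hence $P^*(\lambda_1(\Sigma^{-1})/\lambda_d(\Sigma^{-1})>x)\lesssim x^{-a}$, and since $a>d(d-1)$ by hypothesis we may take $\kappa=a$ in \eqref{eq:4}. With \eqref{eq:1}--\eqref{eq:4} all in force, Theorem~\ref{th:consistency} gives posterior consistency at $f_0$.

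The only point that is not bookkeeping is the tail of the ratio of two i.i.d.\ gamma variables in \eqref{eq:4}: one has to recognise it as Beta-prime and read off its polynomial index as the shape parameter $a$. Everything else --- the two union bounds and the elementary gamma tail estimates --- is routine, and the constraint $a>d(d-1)$ enters solely to make $\kappa=a$ admissible.
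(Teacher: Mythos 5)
Your proposal is correct and its overall architecture matches the paper's: verify \eqref{eq:2}--\eqref{eq:4} for the i.i.d.\ inverse-gamma eigenvalues and invoke Theorem~\ref{th:consistency}. The paper's own proof is a one-line pointer to the relevant part of the proof of Corollary~\ref{cor:consistencyfactor}, where the tail of $\lambda_1(\Omega^{-1})/\lambda_d(\Omega^{-1})$ is obtained by writing the joint density of the ordered statistics of $d$ i.i.d.\ $\mathrm{Ga}(a,b)$ variables, changing variables via the Jacobi formula with $x_1=zx_d$, and integrating out to get $f_z(z)\lesssim z^{a-1}(z+1)^{-2a}$, hence $P^*(z>x)\lesssim x^{-a}$. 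You reach the same tail index by a different and more elementary route: the inclusion $\{\max_i Y_i/\min_i Y_i>x\}\subseteq\bigcup_{i\neq j}\{Y_i/Y_j>x\}$, a union bound costing only the factor $d(d-1)$, and the observation that $Y_1/Y_2$ is Beta-prime with density $\propto w^{a-1}(1+w)^{-2a}$ --- which is, amusingly, the very same functional form the paper derives for the max/min ratio. Your argument avoids the multivariate Jacobian computation entirely and is self-contained, at the price of being tied to independence of the eigenvalues (the paper's order-statistics/Jacobian machinery is what carries over to the Wishart case of Corollary~\ref{cor:consistencywhishart}, where the eigenvalues are dependent). Your verifications of \eqref{eq:2} (union bound plus the exponential gamma right tail, $c_2=1$) and \eqref{eq:3} (union bound plus $\Prob(Y_1<1/x)\lesssim x^{-a}$ from the $t^{a-1}$ behaviour of the density at the origin) are also sound and match what the factor-model proof does in substance.
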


%============================================================================

\section{Posterior convergence rates}
\label{sec:rates}

In this section we discuss some relevant issues related to the derivation of posterior convergence rates for location-scale mixtures. 
Under the prior conditions \eqref{eq:1}--\eqref{eq:4}, the sieve construction laid down in the proof of Theorem 2 would adapt to any rate $\epsilon_n=n^{-\gamma}$ (for any $\gamma\in(0,1/2)$ and up to a logarithmic term) determined by the prior concentration rate. 
This statement is made precise in the following proposition, whose proof is reported in the Appendix. 
%
%% Proposition 
%
\begin{proposition}\label{prop:1}
Let $\tilde\epsilon_n=n^{-\gamma}(\log n)^{t}$ for some $\gamma\in(0,1/2)$ and $t\geq0$ and suppose that 
\begin{equation}\label{eq:prior_KLrate}
  \Pi\left(
  f:\ \int f_0\log(f_0/f)\leq\tilde\epsilon_n^2,\,
  \int f_0\log^2(f_0/f)\leq\tilde\epsilon_n^2
  \right)\geq \edr^{-n\tilde\epsilon_n^2}
\end{equation}  
Then, under the hypothesis of Theorem \ref{th:consistency}, 
  $\Pi_n\{f: d(f_0,f)>(\log n)^s\tilde\epsilon_n\}\to 0$ 
in $F_0^n$-probability for any $s>0$.
\end{proposition}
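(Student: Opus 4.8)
The plan is to apply the posterior contraction-rate analogue of Theorem~\ref{th:Gho07}, with target rate $\epsilon_n:=(\log n)^s\tilde\epsilon_n$. This analogue is obtained by repeating the proof of Theorem~\ref{th:Gho07} verbatim with the fixed constant $\epsilon$ replaced throughout by the vanishing sequence $\epsilon_n$; the only structural change is that the denominator of the posterior is now bounded from below not by a Schwartz-type argument based on \eqref{eq:KL}, but by the two-sided Kullback--Leibler neighbourhood in \eqref{eq:prior_KLrate}, which by the usual argument \citep{ghos:etal:2000} gives $\int\prod_{i=1}^n(f/f_0)(X_i)\,\ddr\Pi(f)\geq\edr^{-2n\tilde\epsilon_n^2}$ on a set of $F_0^n$-probability tending to one. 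Granting this, $\Pi_n(f:\ d(f_0,f)>8\epsilon_n)\to0$ in $F_0^n$-probability follows as soon as one produces a sieve $\F_n$, partitioned as $\bigcup_{\bj,\bl}\F_{n,\bj,\bl}$, with $\Pi(\F_n^c)\lesssim\edr^{-Ln\tilde\epsilon_n^2}$ for a suitably large constant $L$, and with $\sum_{\bj,\bl}\sqrt{N(2\epsilon_n,\F_{n,\bj,\bl},d)}\sqrt{\Pi(\F_{n,\bj,\bl})}\,\edr^{-(4-c)n\epsilon_n^2}\to0$ for some $c>0$. Since $\epsilon_n\geq\tilde\epsilon_n$ for $n$ large and the prior mass in \eqref{eq:prior_KLrate} is monotone in the radius, the rate delivered is $\epsilon_n$, that is $8(\log n)^s\tilde\epsilon_n$, as claimed.

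For the sieve I would take $\F_n$ and $\F_{n,\bj,\bl}$ exactly as defined in the proof of Theorem~\ref{th:consistency}, with the constant $\epsilon$ there replaced by $\epsilon_n$, with $M_n=\sigma_n^{-2c_2}=n$, and with $H_n=\lfloor Cn\epsilon_n^2/\log n\rfloor$ for a small constant $C>0$ to be fixed at the end. The verification $\F_n\uparrow\F$ goes through as before: $H_n\to\infty$, $\sigma_n^2\to0$, and $\sigma_n^2(1+\epsilon_n/\sqrt d)^{M_n}\to\infty$ because $M_n\log(1+\epsilon_n/\sqrt d)\asymp n\epsilon_n\to\infty$ dominates $\log\sigma_n^{-2}\asymp\log n$. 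The estimate of $\Pi(\F_n^c)$ is the computation leading to \eqref{eq:prior_complement}, now giving $\Pi(\F_n^c)\lesssim\edr^{-C(1-2\gamma)n\epsilon_n^2(1+o(1))}+\edr^{-c_1n}+\edr^{-c_3\log(1+\epsilon_n/\sqrt d)\,n}$, where the first exponent picks up the extra factor $1-2\gamma$ because now $\log(Cn\epsilon_n^2/\log n)=(1-2\gamma)\log n\,(1+o(1))$; since $\gamma\in(0,1/2)$ entails $n\epsilon_n^2\ll n\epsilon_n\ll n$, the first summand dominates, and $C(1-2\gamma)n\epsilon_n^2=C(1-2\gamma)(\log n)^{2s}\,n\tilde\epsilon_n^2\geq Ln\tilde\epsilon_n^2$ for all $n$ large, whatever the value of $C>0$. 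For the summability condition, the entropy bound \eqref{eq:entr_shell} and the prior bound \eqref{eq:prior_shell} carry over with $\epsilon\to\epsilon_n$, and summing over $\bj$ (using $r>(d-1)/2$) and over $\bl$ (using $\kappa>d(d-1)$) exactly as in \eqref{eq:bound}--\eqref{eq:C} bounds $\sum_{\bj,\bl}\sqrt{N(2\epsilon_n,\F_{n,\bj,\bl},d)}\sqrt{\Pi(\F_{n,\bj,\bl})}$ by a multiple of $\exp\{\frac12[c_4d+\frac{d-1}{2}(d+1)]Cn\epsilon_n^2(1+o(1))\}$; multiplying by $\edr^{-(4-c)n\epsilon_n^2}$ and choosing $C$ to satisfy \eqref{eq:C} sends the product to $0$, because $n\epsilon_n^2\to\infty$.

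The point worth emphasising---and really the content of the proposition---is why the logarithmic inflation in $\epsilon_n=(\log n)^s\tilde\epsilon_n$ is needed. The sieve-complement estimate is only of order $\edr^{-(\mathrm{const})\,Cn\epsilon_n^2}$ with a constant $C$ that \eqref{eq:C} forces to be small, so at the genuine prior scale $\tilde\epsilon_n$ (i.e. for $s=0$) it would read $\edr^{-(\mathrm{const})\,Cn\tilde\epsilon_n^2}$, which for small $C$ need not beat $\edr^{-Ln\tilde\epsilon_n^2}$; the gap $n\epsilon_n^2=(\log n)^{2s}\,n\tilde\epsilon_n^2$ opened by the extra $(\log n)^s$ is precisely what makes $C(1-2\gamma)n\epsilon_n^2\geq Ln\tilde\epsilon_n^2$ hold for every fixed $C>0$ and large $n$, while the summability discount $\edr^{-(4-c)n\epsilon_n^2}$ still overwhelms the entropy growth $\edr^{+(\mathrm{const})\,Cn\epsilon_n^2}$. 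Apart from this, the work is the logarithmic bookkeeping already performed for Theorem~\ref{th:consistency}, now carried out at the shrinking scale $\epsilon_n$; no new sieve or prior estimate is required, which is exactly the assertion that the construction of $\F_n$ and of the $\F_{n,\bj,\bl}$ ``would adapt to any rate $\epsilon_n=n^{-\gamma}$ (up to a logarithmic term)''.
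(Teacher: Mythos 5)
Your proof is correct and follows essentially the same route as the paper: the paper's own proof takes the identical sieve and partition from Theorem 2 with $\epsilon$ replaced by $\epsilon_n=n^{-\gamma}(\log n)^{t}$, $t>t_0$, asserts the two displayed bounds $\Pi(\F_n^c)\lesssim \edr^{-4n\tilde\epsilon_n^2}$ and the summability at scale $\epsilon_n$ "for $C$ chosen sufficiently small", and then invokes the rate theorem of Ghosal and van der Vaart (2007, Theorem 5; Kruijer et al.\ 2010, Theorem 3), omitting all details. You supply exactly the omitted bookkeeping (including the correct observation that the $(\log n)^{2s}$ gap between $n\epsilon_n^2$ and $n\tilde\epsilon_n^2$ is what lets a small $C$ still defeat the denominator bound coming from \eqref{eq:prior_KLrate}), so the only difference is one of explicitness, not of method.
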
 
As for the prior contraction rate \eqref{eq:prior_KLrate}, in the case of location mixtures the derivation of adaptive $\tilde\epsilon_n$ for $\beta$-H\"older $f_0$ consists of three main steps: (i) construct a density $h_\sigma$ (depending on $f_0$ and $\beta$) such that 
  $d(f_0,\phi_{\sigma}\star h_\sigma)\lesssim\sigma^\beta$
as $\sigma\to 0$; (ii) modify $h_\sigma$ to a compactly supported density $\tilde h_\sigma$ with the same approximation properties;
(iii) approximate the continuous mixture $\phi_{\sigma}\star \tilde h_\sigma$ by a discrete mixture $\phi_{\sigma}\star F_\sigma$ with a convenient lower bound of the ratio $\phi_{\sigma}\star F_\sigma/f_0$. Here $\phi_{\sigma}\star h$ denotes the convolution of $h$ and $\phi_{\sigma^2 I}$. See \citet{shen:etal:2012} and \citet{krui:2010}. 
Steps (ii) and (iii) rely on the assumption of exponential tail of $f_0$, which makes these techniques not suitable for location-scale mixtures,
since the latter are more flexible than location mixtures in modeling densities with heavy tails. 
We come back to this point later. Before, we present a result of the type of convergence rates which are attainable by adapting these techniques to our setting. To this aim, refer to the definition of the locally $\beta$-H\"older class with envelope $L$, denoted ${\cal C}^{\beta,L,\tau_0}(\R^d)$, from \citet{shen:etal:2012}. Moreover, for a multi-index $k=(k_1,\ldots,k_d)$, $k_i\in\N^*$, define $k_.=k_1+\cdots+k_d$ and let $D^k$ denote the mixed partial derivative operator $\partial^{k_.}/\partial^{k_1}\cdots\partial^{k_d}$.
Finally, we make the following two assumptions on the prior mean $P^*$. As for the scale parameter, we resort to an assumption analogous to (4) in \citet{shen:etal:2012} about the mass in neighborhood of the eigenvalues of $\Sigma$: there exist $\kappa^*,a_4,a_5,b_4,C_3$ such that for any $s_1\geq \cdots\geq s_d\geq 0$ and $t\in(0,1)$,
\begin{equation}\label{eq:Gstar}
  P^*\left(s_i<\lambda_i(\Sigma^{-1})<s_i(1+t),\
  i=1,\ldots,d\right)\gtrsim b_4s_d^{a_4}t^{a_5}\exp\{-C_3s_1^{\kappa^*/2}\}.
\end{equation} 
As for the mean parameter, we assume that the marginal of $P^*$ has density $f^*$ and that $f^*$ has tails no lighter than the Gaussian, i.e. for $b^*,\tau^*>0$ and $\|x\|$ sufficiently large,
\begin{equation}\label{eq:fstar}
  f^*(x)\gtrsim \edr^{-b^*\|x\|^{\tau^*}}
\end{equation} 
The proof of the following proposition is deferred to the Appendix. 
%
%% Proposition 
%
\begin{proposition}\label{prop:2}
Let $f_0\in{\cal C}^{\beta,L,\tau_0}(\R^d)$ be a bounded probability density function satisfying 
  $F_0(|D^k f_0|/f_0)^{(2\beta+\epsilon)/k_.}<\infty$,
$k_.\leq\lfloor\beta\rfloor$, 
  $F_0(L/f_0)^{(2\beta+\epsilon)/\beta}<\infty$
for some $\epsilon>0$ and
\begin{equation}\label{eq:8}
  f_0(x)\leq c\exp\{-b\|x\|^\tau\},
\end{equation} 
for some $b,c>0$, $\tau\geq2$ and $\|x\|$ sufficiently large. 
Assume that conditions \eqref{eq:Gstar} and \eqref{eq:fstar} hold for $\tau^*\leq \tau$. Then \eqref{eq:prior_KLrate} holds for
\begin{equation}\label{eq:rate_sub}
  \tilde\epsilon_n=n^{-\beta/(2\beta+d+\kappa^*)}(\log n)^t,\quad
  t\geq \frac{d(1+(\kappa^*+1)/\beta+1/\tau)}{2+(d+\kappa^*)/\beta)}
\end{equation}
\end{proposition}
Proposition \ref{prop:2} yields, via Proposition \ref{prop:1}, a posterior convergence rate which is suboptimal with respect to the minimax rate $n^{-\beta/(2\beta+d)}$ by a term which depends on the constant $\kappa^*$ appearing in \eqref{eq:Gstar}. For illustration, $\kappa^*=2$ if, under $P^*$, $\Sigma$ has Inverse Wishart distribution $\mbox{IW}(\Psi,\nu)$ with $\nu$ degrees of freedom and a positive definite scalar matrix $\Psi$, see Lemma 1 in \citet{shen:etal:2012}. Another useful specification is to consider that, under $P^*$, each $\lambda_i(\Sigma^{-1})$ have been independently assigned the distribution of the square of an inverse gamma random variable. Then $\kappa^*=1$, which leads to a better convergence rate, still suboptimal. 
The technical reason for which the minimax rate is not achieved is to be found in the prior probability of $L_1$-balls around the frequencies of the approximating mixture of the true density. 
Specifically, the approximating mixture has all covariance matrices equal to $\sigma_n^2 I$ for a scaling factor $\sigma_n$ which goes to zero as a function of $\tilde \epsilon_n$. In location-scale mixtures $P^*$ put mass proportional to $\exp\{-C_3\sigma_n^{-\kappa^*}\}$ in neighborhoods of $\sigma_n^2 I$ and this accounts for an extra factor in the radius of the $L_1$-ball with respect to the location mixture case. This, in turns, determines worse probability estimates of Kullback-Leibler neighborhoods. See the proof of Proposition \ref{prop:2} for details.
It can be proved that the minimax rate $n^{-\beta/(2\beta+d)}$ is recovered upon setting the marginal of $P^*$ on $\Sigma$ depending on $n$ by the scaling factor equal to $n^{-1/(2\beta+d)}(\log n)^{-1/\beta}$, however this has clearly a limited relevance since the rate would be non adaptive and it would rule out the Inverse-Wishart distribution as well as any other commonly used prior specification for the covariance matrix.
The suboptimal convergence rate might be related to the fact that location-scale mixtures are more robust to tails than location mixtures. This should emerge through posterior convergence rates which are slower still adaptive to the minimax rate of a suitably defined class of densities with heavy tails. To this aim, an approximation scheme different than the one used for location mixture is in order since the latter relies on the exponential tail condition \eqref{eq:8}. This remains an open problem and is left as an argument for future research.

We discuss next how minimax rates for density estimation depend on the tail.
It is well known that smoothness alone is not sufficient in order to guarantee consistency of density estimators in the $L_1$-norm.
In fact, there is a vast literature on minimax and adaptive minimax density estimation with $L_p$ norm which indicates the existence of a  tail zone, i.e. a range of values of $p$, for which the minimax rate depends on $p$ and deteriorates to $1$ as $p$ decreases to $1$. Such phenomenon does not appear in density estimation on a compact domain, see \citet{Gol:Lep:14} for an up to date literature review. 
\citet{Gol:Lep:14} have also determined a  tail dominance condition which illustrates how the tail zone shrinks to an empty set according to the tails of the density. The result which is relevant to our study is about the minmax rate under $L_1$-norm for $\beta$-Holder classes of densities which is, up to  $\log n$ factors, given by
\begin{equation}\label{eq:heavy}
  \max\left\{n^{-\frac{\beta}{2\beta+d}},\
  n^{-\frac{1-\theta}{1+d/\beta}}\right\}
\end{equation}   
where $\theta$ is a positive parameter in $(0,1]$ which determines the heaviness of the tails. See Remark 4.3 in \citet{Gol:Lep:14}.
The fastest rate $n^{-\beta/(2\beta+d)}$ is recovered for $\theta < \beta/(2\beta+d)$ (light tail), while for $\theta \geq \beta/(2\beta+d)$ (heavy tail) a wide range of slower minimax rates is obtained. See Theorem 13 of \citet{devroye:gyorfi:1985} for a closely related result in the light tail case. 
Clearly the exponential tail assumption \eqref{eq:8} corresponds to small value of $\theta$ and this explain why in \citet{shen:etal:2012} the usual $n^{-\beta/(2\beta+d)}$ rate is achieved.

%============================================================================
\section{Discussion}
\label{sec:discussion}

In this paper we have discussed asymptotic properties of DP location-scale mixtures of Gaussian kernels for multivariate density estimation. 
To our knowledge, this is the first contribution to posterior asymptotics in the context of multivariate location-scale mixtures, a modelling approach which is a common practice in many applications.  
We have given sufficient conditions on the DP prior measure on the space of means and covariance matrices in order to achieve posterior consistency. 
While showing that these conditions are satisfied for widely used inverse-Wishart distribution, with the same practical motivation of providing theoretical justification for models used in practice, we showed that the conditions hold if one uses priors that parsimoniously model the covariance in high dimensional settings, such as a factor model or spectral decomposition  having both computational tractability and better fit in finite samples. 
Future work will deal with mixtures of more general random probability measures, like the two-parameter Poisson-Dirichlet process \citep{perman:etal:1992}, the normalized inverse Gaussian process \citep{lijoi:etal:2005} and, in general, Gibbs-type priors \citep{gnedin:pitman:2005}. A characteristic feature of Gibbs-type priors is given by their heavy-tailedness, which has certain advantages in terms of statistical inference. See \citet{deblasi:etal:2013}. From the perspective of frequentist asymptotics this implies that the sieve based on the truncated stick-breaking construction does not carry over in a straightforward way and hence this problem deserves further investigations.

\section*{Acknowledgements}
Comments on an earlier version of this work from David B. Dunson are greatly appreciated. 
The authors also thank the editor, the associate editor and the reviewers for comments that have helped to improve the paper substantially,
Alberto Zanella for providing useful references about the condition number distribution and Bertrand Lods, Bernardo Nipoti, and Igor Pr\"unster for fruitful discussions.
The present work is supported by Regione Piemonte and by the European Research Council (ERC) through StG \comillas{N-BNP} 306406.

%============================================================================
\section*{Appendix}
%============================================================================
\begin{proof}[Proof of Lemma \ref{lemma:KLsupport}]
%Let $\mbox{supp}(\Pi)$ denote the weak support of $\Pi$ on $\mathscr{P}$. 
It is well known that $P\in\mathscr{P}$ belongs to the weak support of $\Pi^{*}$ (meaning that any weak neighborhood of $P$ has positive probability under $\Pi^{*}$) if the support of $P$ is contained in the support of $P^*$. We recall that the support of $P$ is the smallest closed set of $P$-measure 1. Hence, since the support of $P^*$ is the whole space $\R^d\times\mathcal{S}$, the weak support of $\Pi^{*}$ is the whole space $\mathscr{P}$ and 
%Moreover, any weak neighborhood of a compactly supported $P$ has positive probability under $\Pi$.
it contains, in particular, any compactly supported $P$.

The proofs of Theorem 5 and Theorem 2 of \citet{art:wu:ghos:2008} imply that, under the conditions of Lemma~\ref{lemma:KLsupport}, for any $\epsilon>0$, there exists $P_\epsilon\in\mathscr{P}$ such that $\int f_0\log(f_0/f_{P_\epsilon})\leq \epsilon$. In particular, $P_\epsilon$ can be taken as compactly supported, that is $P_\epsilon(D)=1$ where $D = [-a,a]^p \times \{\Sigma\in \mathcal{S}: \sigmadown^2\leq \lambda_i(\Sigma)\leq \sigmaup^2, i=1,\ldots,d \})$ for some constants $a>0$, $0 <\sigmadown < \sigmaup $.
%\\[5pt]
%
%% Explanation
%
%\begin{footnotesize}
%{\it Explanation.}$\quad$
%  $P_\epsilon=F_\epsilon\times\delta_{\sigma^2_\epsilon\,I}$
%where (i) $F_\epsilon$ is a probability measure on $\R^d$ with density $f_\epsilon(x)=t_\epsilon f_0(x)\Indic_{\|x\|\leq m_\epsilon}$ for some $m_\epsilon>0$ and $t_\epsilon=\int_{\|x\|\leq m_\epsilon}f_0(x)\ddr x$; (ii) $\delta_{\sigma^2_\epsilon\, I}$ is the Dirac's measure on the covariance matrix $\sigma^2_\epsilon\, I$ for some $\sigma^2_\epsilon>0$.
%Take $a\geq m_\epsilon$ and $\lambdadown\leq \sigma^2_\epsilon\leq\lambdaup$ so that $P_\epsilon(G)=1$.
%\fine
%\end{footnotesize}
%\\[5pt]
Since
  $$\int f_0(x) \log \frac{f_0(x)}{f(x)} \ddr x = 
  \int f_0(x) \log \frac{f_0(x)}{f_{P_\epsilon}(x)} \ddr x + 
  \int f_0(x) \log \frac{f_{P_\epsilon}(x)}{f(x)} \ddr x$$
in order to prove \eqref{eq:KL}, we next show that there exists $W\subset\mathscr{P}$ with $\Pi^{*}(W)>0$ such that, for any $P \in W$,
  $\int f_0 \log (f_{P_\epsilon}/f_P) \leq\epsilon$.
To this aim, we verify the hypotheses of Lemma 3 in \citet{art:wu:ghos:2008}. %Since the eigenvalues of a real symmetric matrix depend continuously on the matrix, we can choose an open set $\mathcal{S}_1$ of $\mathcal{S}$ (with respect to the spectral norm) such that, for any $\Sigma\in\mathcal{S}_1$, $\lambda_1(\Sigma),\lambda_d(\Sigma)\in(\lambdadown,\lambdaup)$. Since, by assumption, $\Pi$ is fully supported on $\mathscr{P}$, $P_\epsilon\in\mbox{supp}(\Pi)$. 
It is clear that $P_\epsilon$ belongs to the weak support of $\Pi^{*}$. Next, condition (A7) of Lemma 3 in \citet{art:wu:ghos:2008} 
%is satisfied under condition (B4).
%\\[5pt]
%
%% Explanation
%
%\begin{footnotesize}
%{\it Explanation.}$\quad$
%Condition (A7) 
requires $\log f_{P_\epsilon}$  and $\log\inf_{(\mu,\Sigma)\in D}\phi_\Sigma(x-\mu)$ to be $f_0$-integrable. 
Note that, for $\|x\|<a$, $\inf_{(\mu,\Sigma)\in D}\phi_\Sigma(x-\mu)$ is bounded, while, for $\|x\|\geq a$,
  $$\inf_{(\mu,\Sigma)\in D}\phi_\Sigma(x-\mu)
  =\sigmaup^{-d} 
  \exp\left\{-\frac{4\|x\|^2}{2\sigmadown^2}\right\}.$$
Hence, under the hypotheses made, $\log\inf_{(\mu,\Sigma)\in D}\phi_\Sigma(x-\mu)$ is $f_0$-integrable. Also
  $$|\log f_{P_\epsilon}|
  \leq\left|
  \log\left\{
  \sigmaup^{-d} 
  \exp\left\{-\frac{4\|x\|^2}{2\sigmadown^2}\right\} P_\epsilon(D)
  \right\}
  \right|$$
for $\|x\|\geq a$, so that $\log f_{P_\epsilon}$ is $f_0$-integrable by a similar argument.
%\fine
%\end{footnotesize}
%\\[5pt]

As for condition (A8) of Lemma 3 in \citet{art:wu:ghos:2008}, it is obviously  satisfied since the multivariate normal kernel $\phi_\Sigma(x-\mu)$ is bounded away from zero for $x$ in a compact set of $\R^d$ and $(\mu,\Sigma)\in D$. Finally, condition (A9) of Lemma 3 in \citet{art:wu:ghos:2008} requires that, for $C\subset\R^d$ a given compact set,  
  $\{\phi_\Sigma(x-\mu), x\in C\}$
is uniformly equicontinuous as a family of functions of $(\mu,\Sigma)$ on $D$. This can be shown by adapting to the present context the arguments given in the last part of the proof of Theorem 2 of \citet{art:wu:ghos:2008}.
\end{proof}
%

%==========================================================================

%
\begin{proof}[Proof of Theorem \ref{th:Gho07}]
According to Corollary 1 of \citet{ghos:vand:2007}, for any set of probability measures $\mathcal{Q}$ with 
  $\inf\{d(F_0,Q):\ Q\in\mathcal{Q}\}\geq 4\epsilon$,
any $\alpha,\beta>0$ and all $n\geq 1$, there exists a test $\phi_n$ such that
\begin{equation}\label{eq:eq:corr1}
  F_0^n\phi_n\leq\sqrt{\frac{\beta}{\alpha}}N(\epsilon,\mathcal{Q},d)
  \edr^{-n\epsilon^2},\quad
  \sup_{Q\in\mathcal{Q}}Q^n(1-\phi_n)\leq
  \sqrt{\frac{\alpha}{\beta}}\edr^{-n\epsilon^2}.
\end{equation}
%\blue{[xx minor editing in the first statement xx]}\\
Let $A_\epsilon=\{f:\ d(f_0,f)\leq8\epsilon\}$. Write
\begin{equation}\label{eq:break}
  \Pi(A_\epsilon^c|X_1,\ldots X_n)
  =\Pi(A_\epsilon^c\cap\F_n|X_1,\ldots X_n)
  +\Pi(A_\epsilon^c\cap\F_n^c|X_1,\ldots X_n).
%  &=\Small{\sum_j}\Pi(A_\epsilon^c\cap\F_{n,j}|X_1,\ldots X_n)
%  +\Pi(A_\epsilon^c\cap\F_n^c|X_1,\ldots X_n)
\end{equation}  
From Lemma 4 of \citet{barr:etal:1999}, \eqref{eq:KL} implies that, for every $\eta>0$
\begin{equation}\label{eq:barron}
  F_0^\infty\left\{\Small{\int\prod_{i\leq n}}
  \left(f(x_i)/f_0(x_i)\right)\ddr\Pi(f)
  \leq \exp(-n\eta),\; i.o.\right\}=0,
\end{equation}  
so that, under \eqref{eq:complement}, 
%$\lim_{n\to\infty}\Pi(\F_n^c|x_1,\ldots,x_n)=0$ $F_0^\infty$-a.s., hence 
the second expression on the r.h.s. of \eqref{eq:break} goes to $0$ in $F_0^n$-probability (actually in $F_0^\infty$-almost surely). So it is sufficient to prove that the first expression in the r.h.s. of \eqref{eq:break} goes  to $0$ in $F_0^n$-probability. Let $R_n(f)=\prod_{i\leq n}f(X_i)/f_0(X_i)$ and
  $I_n=\int R_n(f)\ddr\Pi(f)$. 
The event $B_n$ that $I_n\geq \edr^{-cn\epsilon^2}$ for $c$ in \eqref{eq:summability1} satisfies $F_0^n(B_n)\to 1$, see \eqref{eq:barron}. Therefore
  $$F_0^n\left[ \Pi(A_\epsilon^c\cap\F_n|X_1,\ldots,X_n)\right]
  =F_0^n\left[ \Pi(A_\epsilon^c\cap\F_n|X_1,\ldots,X_n)
  \Indic_{B_n}\right]+o_p(1).$$
For arbitrary tests $\phi_{n,j}$, we have 
\begin{align*}
  F_0^n\left[ \Pi(A_\epsilon^c\cap\F_{n,j}|X_1,\ldots,X_n)\Indic_{B_n}\right]
  &\leq F_0^n \phi_{n,j}+F_0^n\left(
  (1-\phi_{n,j})\Small{\int_{A_\epsilon^c\cap\F_{n,j}}}
  R_n(f)\ddr\Pi(f)\right)\edr^{cn\epsilon^2}\\
  &\leq F_0^n \phi_{n,j}
  +\sup_{f\in\F_{n,j}: d(f_0,f)\geq 8\epsilon}
  F^n(1-\phi_{n,j})\Pi(\F_{n,j})\edr^{cn\epsilon^2}
\end{align*} 
which can be bounded by a multiple of
  $$\sqrt{\frac{\beta_j}{\alpha_j}}
  N(2\epsilon,\F_{n,j},d)\edr^{-4n\epsilon^2}
  +\sqrt{\frac{\alpha_j}{\beta_j}}
  \edr^{-(4-c)n\epsilon^2}\Pi(\F_{n,j})$$
for the choice of $\phi_{n,j}$ from Corollary 1 with $2\epsilon$ in place of $\epsilon$, $\mathcal{Q}=\{F\in\F_{n,j}:d(f_0,f)\geq 8\epsilon\}$ and any $\alpha_j,\beta_j>0$. Put $\alpha_j=N(2\epsilon,\F_{n,j},d),\beta_j=\Pi(\F_{n,j})$ 
%and sum over $j$ to obtain the result in view of \eqref{eq:summability1}.
to obtain
\begin{multline*}
  F_0^n\left[ \Pi(A_\epsilon^c\cap\F_n|X_1,\ldots,X_n)\Indic_{B_n}\right]
  \leq \Small{\sum_j}
  \sqrt{N(2\epsilon,\F_{n,j},d)}\sqrt{\Pi(\F_{n,j})} 
  \edr^{-4n\epsilon^2}\\
  +\Small{\sum_j}
  \sqrt{N(2\epsilon,\F_{n,j},d)}\sqrt{\Pi(\F_{n,j})} 
  \edr^{-(4-c)n\epsilon^2}
\end{multline*}  
%By Markov's inequality and $c$ as in \eqref{eq:summability1},
%\begin{align*}
%  F_0^n\left(\Small{\sum_j}
%  \Small{\int_{A_\epsilon^c\cap\F_{n,j}}}
%  R_n(f)\ddr\Pi(f)\geq \edr^{-nc\epsilon^2}\right)
%  &\leq \edr^{nc\epsilon^2}
%  F_0^n\left(\Small{\sum_j\int_{A_\epsilon^c\cap\F_{n,j}}}R_n(f)\ddr\Pi(f)
%  \right)\\
%  &\leq \edr^{nc\epsilon^2}\Small{\sum_j}
%  \sqrt{N(2\epsilon,\F_{n,j},d)}\sqrt{\Pi(\F_{n,j})} 
%  \edr^{-4n\epsilon^2}
%\end{align*}
%so that \eqref{eq:summability1} implies 
%  $$F_0^n\left(\Small{\sum_j\int_{A_\epsilon^c\cap\F_{n,j}}}R_n(f)\ddr\Pi(f)
%  \geq\edr^{-cn\epsilon^2}\right)\to 0$$
%for some $c>0$ in $F_0^n$-probability. 
%This combined with $F_0^\infty(I_n^{-1}\geq \edr^{cn\epsilon^2/2},\; i.o.)=0$, see \eqref{eq:barron}, yields the result. 
so that \eqref{eq:summability1} yields the result.
\end{proof}
%

%==========================================================================
%
%% Lemma
%
\begin{lemma}\label{lemma:sieve}
Let $H,M\in\N$, $\sigmadown>0$ and, for $h=1,\ldots,H$, $0\leq\underline{a}_h<\bar{a}_h$, $1\leq u_h$. Define
\begin{multline*}
  \mathcal{G}=\{f_P\mbox{ with }P=\Small{\sum}_{h\geq 1}
  \pi_h\delta_{(\mu_h,\Sigma_h)}:\ 
  \Small{\sum}_{h>H}\pi_h\leq \epsilon;\
  \mbox{ for } h\leq H,\ 
  \underline{a}_h<\|\mu_h\|\leq \bar{a}_h,
  \\
  \sigmadown^2\leq \lambda_d(\Sigma_h),\lambda_1(\Sigma_h)
  \leq \sigmadown^2(1+\epsilon/\sqrt d)^M,\
  \lambda_1(\Sigma_h)/\lambda_d(\Sigma_h)\leq 
  u_h
  \}
\end{multline*}
Then, for some positive constant $C_1$,
\begin{equation}\label{eq:6}
  \begin{split}
  N(\epsilon,\mathcal{G},\|\cdot\|_1)\lesssim
  &\exp\Big\{\Small{
  dH\log M
  +H\log\frac{C_1}{\epsilon}}\\
  &+\Small{
  \sum_{h\leq H}
  \log\Big[
  \Big(\frac{\bar{a}_h}{\sigmadown\epsilon/2}+1\Big)^d
  -\Big(\frac{\underline{a}_h}{\sigmadown\epsilon/2}-1\Big)^d\Big]
  +\frac{d(d-1)}{2}\log\frac{2d\,u_h}{\epsilon^2}}
  \Big\}.
  \end{split}
\end{equation} 
\end{lemma}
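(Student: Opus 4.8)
The plan is to build an explicit finite $\|\cdot\|_1$-net for $\mathcal{G}$ by discretising, one atom at a time, the parameters of the leading $H$ components of the mixing measure, after first discarding the tail. Given $f_P\in\mathcal{G}$, since $\sum_{h>H}\pi_h\leq\epsilon$ the sub-probability density $g_P=\sum_{h\leq H}\pi_h\phi_{\Sigma_h}(\cdot-\theta_h)$ satisfies $\|f_P-g_P\|_1\leq\epsilon$, so it suffices to cover $\{g_P:f_P\in\mathcal{G}\}$ to within $\epsilon$ in $L_1$; the rescaling of $\epsilon$ is absorbed into the constants $C_1$, $2$ and $2d$ in \eqref{eq:6}. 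I would then discretise the weight vector $(\pi_1,\ldots,\pi_H)$ on the simplex: a volumetric argument gives an $\epsilon$-net of it in $\ell_1$ of cardinality $\lesssim(C_1/\epsilon)^H$, and perturbing the weights by $\epsilon$ in $\ell_1$ moves $g_P$ by at most $\epsilon$ in $L_1$ since each $\phi_{\Sigma_h}(\cdot-\theta_h)$ has unit mass; this produces the term $H\log(C_1/\epsilon)$.

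For the locations I would use the elementary inequality $\|\phi_\Sigma(\cdot-\theta)-\phi_\Sigma(\cdot-\theta')\|_1\leq\|\Sigma^{-1/2}(\theta-\theta')\|\leq\|\theta-\theta'\|/\sqrt{\lambda_d(\Sigma)}\leq\|\theta-\theta'\|/\sigma$, uniformly over the admissible $\Sigma$. Hence a cubic grid of side $\asymp\sigma\epsilon$ intersected with the spherical shell $\{\underline{a}_h<\|\theta\|\leq\bar{a}_h\}$ is fine enough for component $h$, and the number of its points is at most (number of grid points in the ball of radius $\bar{a}_h$) minus (number of grid points strictly inside the ball of radius $\underline{a}_h$), i.e. $\lesssim(\bar{a}_h/(\sigma\epsilon/2)+1)^d-(\underline{a}_h/(\sigma\epsilon/2)-1)^d$. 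Summing logarithms over $h\leq H$ gives the shell term of \eqref{eq:6}, while bounding the same count crudely by $(a/(\sigma\epsilon))^d$ with $a=\max_{h\leq H}\bar{a}_h$ records the term $dH\log(a/(\sigma\epsilon))$.

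The covariances are the crux. I would parametrise $\Sigma_h=O_h\Lambda_hO_h^T$ spectrally and use the $L_1$-isometric change of variables $y=\Sigma_h^{-1/2}(x-\theta_h)$, which also removes the dependence on $\theta_h$, to get $\|\phi_{\Sigma_h}(\cdot-\theta)-\phi_{\Sigma_h'}(\cdot-\theta)\|_1=\|\phi_I-\phi_A\|_1\lesssim\|A-I\|_F$ with $A=\Sigma_h'^{-1/2}\Sigma_h\Sigma_h'^{-1/2}$, the last bound being the Hellinger estimate between centred Gaussians valid for small perturbations. For the eigenvalues I would round each $\lambda_i(\Sigma_h)$ down to the nearest point of the geometric grid $\{\sigma^2(1+\epsilon/\sqrt d)^k:0\leq k\leq M\}$, which stays in the admissible range, costs relative error $\leq\epsilon/\sqrt d$ per eigenvalue, hence $\lesssim\epsilon$ aggregated in $\|A-I\|_F$, and contributes $(M+1)^d$ configurations per component, i.e. the term $dH\log M$. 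For the eigenvectors I would take a $\delta$-net of the orthogonal group $O(d)$, a compact $\tfrac{d(d-1)}{2}$-dimensional manifold of bounded diameter, whose covering number is $\lesssim(C/\delta)^{d(d-1)/2}$; writing $O_h'^{T}O_h=I+E$ with $\|E\|\lesssim\delta$, the cross term that appears in $A-I$ has Frobenius norm $\lesssim\delta\,\lambda_1(\Sigma_h)/\lambda_d(\Sigma_h)\leq\delta\,u_h$, so the choice $\delta\asymp\epsilon^2/(2d\,u_h)$ (finer than strictly needed) does the job and yields $\lesssim(2d\,u_h/\epsilon^2)^{d(d-1)/2}$ orientations per component, i.e. the term $\sum_{h\leq H}\tfrac{d(d-1)}{2}\log(2d\,u_h/\epsilon^2)$. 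Finally I would take the product of the weight net, the location grids and the covariance discretisations; by the triangle inequality and the three Lipschitz estimates above, every $g_P$ (hence every $f_P\in\mathcal{G}$) lies within $O(\epsilon)$ in $L_1$ of one member of this finite family, and the logarithm of its cardinality is the sum of the five counts, which is exactly the right-hand side of \eqref{eq:6}.

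I expect the eigenvector step to be the main obstacle: bounding $\|\Sigma_1^{-1/2}\Sigma_2\Sigma_1^{-1/2}-I\|_F$, equivalently the $L_1$ distance of the associated Gaussians, under a perturbation of the eigenvector matrix, and seeing that the perturbation is amplified precisely by the condition number $\lambda_1(\Sigma)/\lambda_d(\Sigma)\leq u_h$. This amplification is what forces the grid on the orientation of $\Sigma$ to be refined as $u_h$ grows, so the entropy of $\mathcal{G}$ carries the factor $u_h$ — which is in turn why the tail assumption on the condition number, \eqref{eq:4}, is the distinctive hypothesis of Theorem \ref{th:consistency}.
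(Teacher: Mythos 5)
Your proposal is correct and follows essentially the same route as the paper's proof: drop the tail, net the renormalized weights on the simplex, grid the locations on $\sigma\epsilon$-scale within each shell, put the eigenvalues on the geometric grid $\sigma^2(1+\epsilon/\sqrt d)^k$, and take a $\delta_h$-net of the orthogonal group with $\delta_h\asymp\epsilon^2/(2d\,u_h)$, the condition number entering exactly where you predict. The only (inessential) difference is in the Gaussian perturbation bound for the covariance step: you invoke the Frobenius-norm/Hellinger estimate $\|\phi_I-\phi_A\|_1\lesssim\|A-I\|_F$ directly, whereas the paper inserts an intermediate matrix $\widetilde\Sigma=(O_2\Lambda_1O_2^T)^{-1}$ and controls each piece via Csisz\'ar's inequality and the exact Kullback--Leibler divergence, obtaining the amplification $\tr(Q\Lambda_1Q^T\Lambda_1^{-1}-I)\leq 2d\|Q-I\|_{\max}\,\lambda_{1,1}/\lambda_{1,d}$ by an explicit trace computation.
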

\begin{proof}
The proof of \eqref{eq:6} is based on a modification of the arguments of Proposition 2 in \citet{shen:etal:2012} to the location-scale mixture case. 
We recall here that a set $\hat{\mathcal{G}}\subset\mathcal{G}$ with the property that any element of ${\cal G}$  is within $\epsilon$-distance from an element of $\hat{\cal G}$ is called an {\it $\epsilon$-net} over ${\cal G}$. Since $N(\epsilon,{\cal G},d)$ is the minimal cardinality of an $\epsilon$-net over ${\cal G}$, $N(\epsilon,{\cal G},d)\leq\#(\hat{\mathcal{G}})$. 

Let $P_1=\sum_{h\geq 1}\pi_h^{(1)}\delta_{(\mu_h^{(1)},\Sigma_h^{(1)})}$ and $P_2=\sum_{h\geq 1}\pi_h^{(2)}\delta_{(\mu_h^{(2)},\Sigma_h^{(2)})}$. 
Then
\begin{small}
\begin{align*}
  \|f_{P_1}-f_{P_2}\|_1
  \leq&
  \sum_{h\leq H}\pi_h^{(1)}\left\|\phi_{\Sigma_h^{(1)}}
  (\cdot-\mu_h^{(1)})-\phi_{\Sigma_h^{(2)}}
  (\cdot-\mu_h^{(2)})  
  \right\|_1
  +\sum_{h\leq H}\left|\pi_h^{(1)}-\pi_h^{(2)}\right|
  +\sum_{h> H}\left\{\pi_h^{(1)}+\pi_h^{(2)}\right\}.
\end{align*}
\end{small}
Note that
\begin{small}
\begin{align*}
  \left\|\phi_{\Sigma_h^{(1)}}
  (\cdot-\mu_h^{(1)})-\phi_{\Sigma_h^{(2)}}
  (\cdot-\mu_h^{(2)})  
  \right\|_1
  &\leq
  \left\|\phi_{\Sigma_h^{(2)}}
  (\cdot-\mu_h^{(1)})-\phi_{\Sigma_h^{(2)}}
  (\cdot-\mu_h^{(2)})  
  \right\|_1
  +
  \left\|\phi_{\Sigma_h^{(1)}}
  -\phi_{\Sigma_h^{(2)}}
  \right\|_1.
\end{align*}
\end{small}
The first term in the r.h.s. is smaller than
  $\sqrt{2/\pi}\|\mu_h^{(1)}-\mu_h^{(2)}\|
  \big/\sqrt{\lambda_d(\Sigma_h^{(2)})}$. 
As for the second term, use spectral decompositions $\Sigma_h^{(j)}=[O_h^{(j)}\Lambda_h^{(j)}(O_h^{(j)})^T]^{-1}$, $j=1,2$,  where $O_h^{(1)},O_h^{(2)}$ are orthogonal matrices and $\Lambda_h^{(1)},\Lambda_h^{(2)}$ are diagonal matrices. 
Drop the index $h$ so to ease the notation and write $\Sigma_h^{(j)}=\Sigma_j$, $O_h^{(j)}=O_j$ and $\Lambda_h^{(j)}=\Lambda_j$. Moreover, let $\lambda_{j,1}\geq\ldots\geq\lambda_{j,d}$ be the diagonal elements of $\Lambda_j$.
Finally, define $\widetilde{\Sigma}=(O_2\Lambda_1O_2^T)^{-1}$. By triangular inequality
\begin{equation}\label{eq:triang}
  \left\|\phi_{\Sigma_1}-\phi_{\Sigma_2}\right\|_1
  \leq 
  \left\|\phi_{\widetilde\Sigma}-\phi_{\Sigma_2}\right\|_1
  +\left\|\phi_{\widetilde\Sigma}-\phi_{\Sigma_1}\right\|_1.
\end{equation}  
For each term in the r.h.s., use Csisz\'ar's inequality,
  $\|f-g\|^2_1\leq 2 \Small{\int}\log(f/g)f$,
and the exact expression of the Kullback-Leibler divergence between zero mean multivariate Gaussians to get
  $$\Small{\int}\log(\phi_{\Sigma_2}/\phi_{\Sigma_1})\phi_{\Sigma_2}
  ={1\over 2}
  \left(\tr(\Sigma_1^{-1}\Sigma_2)-\log\det(\Sigma_1^{-1}\Sigma_2)-d
  \right).$$
As for the first term in the r.h.s. of \eqref{eq:triang},
\begin{align*}
  \left\|\phi_{\widetilde\Sigma}-\phi_{\Sigma_2}\right\|_1
  &\leq
  \left\{\tr(\Sigma_2^{-1}\widetilde\Sigma)-\log\det(\Sigma_2^{-1}
  \widetilde\Sigma)-d\right\}^{1/2}\\
  &=  \left\{\Small{\sum}_{i=1}^d\left(
  \lambda_{2,i}/\lambda_{1,i}
  -\log\Small{\lambda_{2,i}/\lambda_{1,i}}-1\right)\right\}^{1/2}
\end{align*}
where $\lambda_{2,i}/\lambda_{1,i}$ corresponds to $\lambda_{d-i+1}(\Sigma_h^{(1)})/\lambda_{d-i+1}(\Sigma_h^{(2)})$ in the original notation. As for the second term in the r.h.s. of \eqref{eq:triang},
\begin{align*}
  \left\|\phi_{\widetilde\Sigma}-\phi_{\Sigma_1}\right\|_1
  &\leq
  \left\{\tr(\Sigma_1^{-1}\widetilde\Sigma)-\log\det(\Sigma_1^{-1}
  \widetilde\Sigma)-d\right\}^{1/2}\\
  &=\left\{\tr(Q\Lambda_1Q^T\Lambda_1^{-1})
  -d\right\}^{1/2}
  =\left\{\tr(Q\Lambda_1Q^T\Lambda_1^{-1}-I)\right\}^{1/2}
\end{align*}
where $Q=O_2^TO_1$. Let $B=Q-I$ and $\|B\|_{\max}=\max|b_{i,j}|$. 
Then 
  $$\|B\|_{\max}\leq \|B\|_2=\|O_2^TO_1-I\|_2
  \leq\|O_2^T\|_2\|O_1-O_2\|_2=\|O_1-O_2\|_2.$$
Let $b_{ij}$ be the element $(i,j)$ of $B$.
Note that
\begin{align*}
  \tr(Q\Lambda_1Q^T\Lambda_1^{-1}-I)
  &=\tr(B+\Lambda_1B^T\Lambda_1^{-1}+B\Lambda_1B^T\Lambda_1^{-1})\\
  &=\tr(B)+\tr(\Lambda_1B^T\Lambda_1^{-1})+
  \tr(B\Lambda_1B^T\Lambda_1^{-1})\\
  &=\tr(B)+\tr(B^T)+
  \tr(B\Lambda_1B^T\Lambda_1^{-1})\\
  &\leq 
  2\tr(B)+\Small{\frac{\lambda_{11}}{\lambda_{1d}}} \tr(BB^T)\\
  &=2\tr(B)+2 \Small{\frac{\lambda_{11}}{\lambda_{1d}}} \tr(I-Q)
  =2\tr(B)\left[1-\Small{\frac{\lambda_{11}}{\lambda_{1d}}}\right]\\
  &\leq 2d\|B\|_{\max} \left[
  \Small{\frac{\lambda_{1,1}}{\lambda_{1,d}}}-1\right]
  \leq 2d\|B\|_{\max}\Small{\frac{\lambda_{1,1}}{\lambda_{1,d}}}
\end{align*}  
where in the second last inequality we use $\tr(B)\geq -d\|B\|_{\max}$ together with $\lambda_{1,1}/\lambda_{1,d}\geq 1$. 
Hence,
  $$\left\|\phi_{\widetilde\Sigma}-\phi_{\Sigma_1}\right\|_1
  \leq\left\{2d\|O_1-O_2\|_2 
  \Small{\frac{\lambda_{1,1}}{\lambda_{1,d}}}
  \right\}^{1/2}$$
where $\lambda_{1,1}/\lambda_{1,d}$ corresponds to $\lambda_1(\Sigma_h^{(1)})/\lambda_d(\Sigma_h^{(1)})$ in the original notation.
In summary, back to the original notation, 
\begin{multline*}
  \|f_{P_1}-f_{P_2}\|_1
  \leq \sum_{h\leq H}\pi_h^{(1)} 
  \left[
  \sqrt{\frac{2}{\pi}}\frac{\|\mu_h^{(1)}-\mu_h^{(2)}\|}
  {\sqrt{\lambda_d(\Sigma_h^{(2)})}}+
  \left\{\sum_{i=1}^d\left(\frac{\lambda_i(\Sigma_h^{(1)})}
  {\lambda_i(\Sigma_h^{(2)})}
  -\log\frac{\lambda_i(\Sigma_h^{(1)})}
  {\lambda_i(\Sigma_h^{(2)})}-1\right)\right\}^{1/2}\right.\\
  \left.+\left\{2d\|O_h^{(1)}-O_h^{(2)}\|_2 
  \frac{\lambda_1(\Sigma_h^{(1)})}{\lambda_d(\Sigma_h^{(1)})}
  \right\}^{1/2}
  \right]
  +\sum_{h\leq H}\left|\pi_h^{(1)}-\pi_h^{(2)}\right|
  +\sum_{h> H}\left\{\pi_h^{(1)}+\pi_h^{(2)}\right\}.
\end{multline*}
Now pick $f_P\in{\cal G}$ with $P=\sum_{h\geq 1}\pi_h\delta_{(\mu_h,\Sigma_h)}$, $\Sigma_h=(O_h\Lambda_hO_h^T)^{-1}$ and $\Lambda_h=\mbox{diag}(\lambda_{h,1},\ldots,\lambda_{h,d})$. Then find\\
$\bullet$ 
$\hat\mu_h\in\hat{R}_h$, $h=1,\ldots,H$, where $\hat{R}_h$ is a $\sigmadown\epsilon$-net of $R_h=\{\mu\in \R^d:\ \underline{a}_h<\|\mu\|\leq \bar{a}_h\}$ such that
  $\|\mu_h-\hat \mu_h\|<\sigmadown\epsilon;$\\
$\bullet$ $\hat\pi_1,\ldots,\hat\pi_H\in\hat \Delta$, where $\hat \Delta$ is a $\epsilon$-net of the $H$-dimensional probability simplex $\Delta$ 
such that
  $\Small{\sum}_{h\leq H}\left|\tilde\pi_h-\hat\pi_h \right|\leq\epsilon,$
and $\tilde\pi_h=\pi_h\Big/\sum_{l\leq H}\pi_l$, $h\leq H$;\\  
$\bullet$ $\hat O_h\in\hat{\mathcal{O}_h}$, $h=1,\ldots,H$, where $\hat{\mathcal{O}_h}$ is a $\delta_h$-net of the set $\mathcal{O}$ of $d\times d$ orthogonal matrices with respect to the spectral norm $\|\cdot\|_2$, with
  $\delta_h=\epsilon^2/(2d\,u_h)$
such that 
  $\|O_h-\hat O_h\|_2\leq \delta_h$;\\
$\bullet$ $(m_{h,1},\ldots,m_{h,d})\in\{1,\ldots,M\}^d$, $h=1,\ldots,H$, such that 
  $\hat\lambda_{h,i}=\{\sigmadown^2(1+\epsilon/\sqrt d)^{m_{h,i}-1}\}^{-1}$
satisfies
  $1\leq \hat\lambda_{h,i}/\lambda_{h,i}<(1+\epsilon/\sqrt d).$\\  
Take $\hat P=\sum_{h\leq H}\hat\pi_h\delta_{(\hat \mu_h,\hat\Sigma_h)}$ and $\hat\Sigma_h=(\hat O_h\hat\Lambda_h\hat O_h^T)^{-1}$, where $\hat\Lambda_h=\diag(\hat\lambda_{h,1},\ldots,\hat\lambda_{h,d})$. Then
\begin{align*}
  \|f_{P}-f_{\hat P}\|_1
  \leq& 
  \max_{h\leq H}
  \left[
  \sqrt{\frac{2}{\pi}}\frac{\|\mu_h-\hat \mu_h\|}
  {\sqrt{\lambda_d(\hat\Sigma_h)}}+
  \left\{\sum_{i=1}^d\left(\frac{\lambda_i(\Sigma_h)}
  {\lambda_i(\hat\Sigma_h)}
  -\log\frac{\lambda_i(\Sigma_h)}
  {\lambda_i(\hat\Sigma_h)}-1\right)\right\}^{1/2}\right]\\
  &
  +\sum_{h\leq H}\pi_h
  \left\{2d\|O_h-\hat O_h\|_2 
  \frac{\lambda_1(\Sigma_h)}{\lambda_d(\hat\Sigma_h)}
  \right\}^{1/2}
  +\sum_{h\leq H}\left|\pi_h-\hat\pi_h\right|
  +\sum_{h> H}\pi_h\\
  \leq& 
  \max_{h\leq H}
  \left[
  \sqrt{\frac{2}{\pi}}\frac{\|\mu_h-\hat \mu_h\|}
  {\sigmadown}+
  \left\{\sum_{i=1}^d\left(\frac{\hat\lambda_{h,d-i+1}}
  {\lambda_{h,d-i+1}}
  -1\right)^2\right\}^{1/2}\right]\\
  &
  +
  \sum_{h\leq H}\pi_h
  \left\{2d\|O_h-\hat O_h\|_2 
  \frac{\lambda_{h,1}}{\lambda_{h,d}}
  \right\}^{1/2}
  +\sum_{h\leq H}\left|\pi_h-\hat\pi_h\right|
  +\sum_{h> H}\pi_h,  
\end{align*}
where in the second inequality $x-\log x-1\leq (x-1)^2$ for $x\geq1$ together with $\hat\lambda_{h,d-i+1}/\lambda_{h,d-i+1}\geq 1$ has been used to bound the second term. Moreover, 
\begin{align*}
  \sum_{h\leq H}\left|\pi_h-\hat\pi_h\right|
  &\leq\sum_{h\leq H}\left|\pi_h-(1-\Small{\sum}_{h>H}\pi_h)\hat\pi_h\right|+
  \sum_{h\leq H}\left|(1-\Small{\sum}_{h>H}\pi_h)\hat\pi_h-\hat\pi_h\right|\\
  &=(1-\Small{\sum}_{h>H}\pi_h)
  \sum_{h\leq H}\left|\tilde\pi_h-\hat\pi_h\right|
  +(\Small{\sum}_{h>H}\pi_h)\sum_{h\leq H}\hat\pi_h\\
  &\leq \sum_{h\leq H}\left|\tilde\pi_h-\hat\pi_h\right|
  +\sum_{h>H}\pi_h\leq 2\epsilon.
\end{align*}  
Hence, 
\begin{align*}
  \|f_{P}-f_{\hat P}\|_1
  &\leq 
  \sqrt{\frac{2}{\pi}}\epsilon+
  \left\{\sum_{i=1}^d\left(1+\frac{\epsilon}{\sqrt d}-1\right)^2
  \right\}^{1/2}
  +\sum_{h\leq H}\pi_h
  \left\{2d\frac{\epsilon^2}
  {2d\,u_h}\frac{\lambda_{h,1}}{\lambda_{h,d}} \right\}^{1/2}
  +2\epsilon
  +\epsilon\\
  &\leq \epsilon+\{\epsilon^2\}^{1/2}+\{\epsilon^2\}^{1/2}
  +3\epsilon=6\epsilon.
\end{align*}
Thus a $6\epsilon$-net of ${\cal G}$, in the $L_1$ distance, can be constructed with $f_{\hat P}$ as above. Recalling that $\#(\hat\Delta)\lesssim \epsilon^{-H}$, $\#(\hat{\mathcal{O}_h})\lesssim \delta_h^{-\frac{d(d-1)}{2}}$ and
  $\#(\hat R_h)\leq 
  (\bar{a}_h/(\sigmadown\epsilon/2)+1)^d
  -(\underline{a}_h/(\sigmadown\epsilon/2)-1)^d$, 
the total number is
  $$\lesssim
  (M)^{dH}
  \epsilon^{-H}
  \prod_{h\leq H}
  \left[\left(\frac{\bar{a}_h}{\sigmadown\epsilon/2}+1\right)^d
  -\left(\frac{\underline{a}_h}{\sigmadown\epsilon/2}-1\right)^d\right]
  C_1\left(\frac{2d\,u_h}{\epsilon^2}\right)^{\frac{d(d-1)}{2}}
  $$
for some positive constant $C_1$. Finally, the constant factor by $6$ can be absorbed in the bound, hence \eqref{eq:6} follows.
\end{proof}
%
%% Remark
%
\begin{remark}
\label{rem:nogoshetal}
The usual low-entropy, high mass sieve approach of Theorem 2 by \citet{ghos:etal:1999} is not suitable for multivariate DP location-scale mixture. This theorem, indeed, requires a sieve $\F_n$ such that: (i) $\Pi(\F_n^c)$ decreases exponentially fast in $n$, and (ii) the $L_1$ metric entropy of $\F_n$ grows linearly with $n$. As for (i), let $\F_n$ to be defined as $\mathcal{G}$ of Lemma \ref{lemma:sieve} with $M=\sigmadown^{-2c_2}=\bar{a}_h=u_h=n$, $H_n=\lfloor Cn\epsilon^2/\log n\rfloor$, $\underline{a}_h$=0 and $l_h=1$. Clearly $\F_n \uparrow \F$ as $n\to\infty$.
However, in order to have $\F_n$ satisfying condition (i), stronger tail conditions on $P^*$ are necessary than those in \eqref{eq:1} and \eqref{eq:4}, namely exponential tail behaviors.
While one can be still satisfied with a Gaussian specification for $\mu$, exponential tail for the condition number turns out to be too restrictive ruling out all the models discussed in Section 3 as well as any other reasonable prior specification for non diagonal covariance matrices. However, as an inspection of the proof of Lemma \ref{lemma:sieve} reveals, with diagonal covariance matrices the metric entropy is considerably smaller and one can prove consistency by applying Theorem 2 of \citet{ghos:etal:1999} under essentially the same tail requirements of DP location mixture of \citet{shen:etal:2012}.  We omit the details here.	
\end{remark}
\begin{proof}[Proof of Corollary \ref{cor:consistencywhishart}]
That conditions  \eqref{eq:2} and \eqref{eq:3} are satisfied when $\mathcal{L}=IW(\Sigma_0,\nu)$ has been proved in Lemma 1 of \citet{shen:etal:2012}.
Therefore we focus here on condition \eqref{eq:4} on the condition number. Let for the moment $\Sigma_0=I$. We directly work with the joint distribution of the ordered eigenvalues of a Wishart-distributed matrix: 
  $$f_{\lambda_1,\dots, \lambda_d}(x_1,\dots, x_d) 
  = K \exp\left\{- \frac{1}{2}\sum_{i=1}^d x_i\right\} 
  \prod_{j=1}^d (x_j)^{(\nu-d-1)/2} 
  \prod_{i<j}(x_i - x_j),$$
over the set $\{(x_1,\ldots,x_d)\in(0,\infty)^d:\; x_1\geq\ldots\geq x_d\}$, where $K$ is a known normalizing constant. See Corollary 3.2.19, page 107, of \citet{muirhead:1982}. 
To show condition \eqref{eq:4}, let $z = \lambda_1/\lambda_d$
and apply the Jacobi's transformation formula for
  $g(\lambda_1,\dots, \lambda_d) = 
  (z, \lambda_2, \dots, \lambda_d).$
For $J_{g^{-1}}$ denoting the Jacobian of $g^{-1}$, we have
  $$f_{z,\lambda_2,\dots,\lambda_d} (z,\dots, y)
  = |J_{g^{-1}}| f_{\lambda_1, \dots, \lambda_d}(x_dz,x_2,\dots, x_d),$$
and the marginal of $z$ can be obtained integrating out $x_2, \dots, x_d$. 
Using $|J_{g^{-1}}| = x_d$, $x_d\geq 0$ and $x_d\leq x_i\leq zx_d$ for $i=2,\ldots,d-1$, after some algebra one gets
  $$f_{z}(z)\leq K'' z^{-\frac{\nu-d+3}{2}}$$ 
for some constant $K''$, so that $P^*(z>x) \lesssim x^{-(\nu-d+1)/2}$ as $x\to\infty$. Cfr. Theorem 3.2 of \citet{edelman:sutton:2005}. If $\Sigma_0\neq I$, the conclusion holds for a different set of constants, see Theorem 4 of \citet{matthaiou:etal:2010} for the complex Wishart case. Condition \eqref{eq:4} is then satisfied under the hypothesis on $\nu$.
\end{proof}
\begin{proof}[Proof of Corollary \ref{cor:consistencyfactor}]
To prove condition \eqref{eq:2}, consider
  \[
  P^* \left\{ \lambda_1(\Sigma^{-1}) \geq x \right\} 
  \leq P^* \left\{ \text{tr}(\Sigma^{-1}) \geq x \right\}
  \leq P^* \left\{ \text{tr}(\Omega^{-1}) \geq x \right\},
  \]
where the last inequality  is verified since  $\text{tr}(\Sigma^{-1}) \leq \text{tr}(\Omega^{-1})$ by an application of the Woodbury's identity. Since the trace of $\Omega^{-1}$ is the sum of i.i.d. gamma random variable, and the gamma has exponential tail, \eqref{eq:2} is verified.
As for \eqref{eq:3}, consider
  \[
  P^* \left\{ \lambda_d(\Sigma^{-1}) < 1/x \right\} 
  \leq 	x^{-1} E_{P^*} \left\{ \lambda_1(\Sigma) \right\} 
  \leq x^{-1} \left[E_{P^*} \left\{ \lambda_1(\Omega)\right\}  
  + E_{P^*} \left\{\lambda_1(\Gamma \Gamma^T) \right\}\right]
  \]
by Markov's and Weyl's inequalities. The expectation of $\lambda_1(\Omega)$ is finite and since 
$\lambda_1(\Gamma \Gamma^T) \leq \mbox{tr}(\Gamma \Gamma^T) \sim \chi^2_{rd}$, also the expectation of $\lambda_1(\Gamma \Gamma^T)$ is finite.
Finally, to prove condition \eqref{eq:4}, first use  Markov's inequality. For $k > d(d-1)$, we have
\begin{align*}
  P^* \left\{ \lambda_1(\Sigma^{-1})/\lambda_d(\Sigma^{-1}) 
  \geq x \right\} 
  & = P^* \left\{
  \left(\lambda_1(\Sigma)/\lambda_d(\Sigma)\right)^k 
  \geq x^k \right\} \\ & \leq
  x^{-k} E \left\{ \left(\lambda_1(\Sigma)
  /\lambda_d(\Sigma)\right) ^k  \right\} . 
\end{align*}
Using again Weyl's inequalities and noting that $\lambda_d(\Gamma\Gamma^T)=0$ when $r<d$, we have 
  \[
  \frac{\lambda_1(\Sigma)}{\lambda_d(\Sigma)} \leq 
  \frac{\lambda_1(\Omega)+\lambda_1(\Gamma \Gamma^T)}
  {\lambda_d(\Omega)+\lambda_d(\Gamma \Gamma^T)} 
  =
  \frac{\lambda_1(\Omega)+\lambda_1(\Gamma \Gamma^T)}{\lambda_d(\Omega)} 
  \leq
  \frac{\lambda_1(\Omega^{-1})}{\lambda_d(\Omega^{-1})} 
  + \frac{\tr(\Gamma \Gamma^T)}{\lambda_d(\Omega)}.
  \]
By convexity of $g(x) = x^k$, for any $x,y \in \R^+$ we have $ (x+y)^k \leq 2^{k-1}(x^k + y^k)$. Hence $\lambda_1(\Sigma)/\lambda_d(\Sigma)$ has finite $k$-th moment if both summand have finite $k$-th moment. 
First consider the distribution of $\lambda_1(\Omega^{-1})/\lambda_d(\Omega^{-1})$.
Since $\Omega^{-1}$ is a diagonal matrix with iid gamma distributed diagonal elements, the joint distribution of the ordered eigenvalues of $\Omega^{-1}$ is the joint distribution of the ordered statistics of an iid sample from $\mbox{Ga}(a,b)$, i.e.
  $$	
  f_{\lambda_1(\Omega^{-1}) \dots \lambda_d(\Omega^{-1})}
  (x_1, \dots, x_d) = 
  d! f_{\sigma^{-2}_1}(x_1)\times \ldots\times f_{\sigma^{-2}_d}(x_d) = 
  K \prod_{j=1}^d x_j^{a-1} \exp\left\{-b \sum_{j=1}^d x_j\right\},
  $$
over the set $\{(x_1,\ldots,x_d)\in(0,\infty)^d:\; x_1\geq\ldots\geq x_d\}$, where $K$ is a known normalizing constant. 
Now we use the Jacobi's transformation formula as in the proof of Corollary~\ref{cor:consistencywhishart} to get, for $z=\lambda_1(\Omega^{-1})/\lambda_d(\Omega^{-1})$,
\begin{align*}
  f_z(z)  = 
  & K  \int_{x_3}^\infty \dots 
  \int_{x_{d-2}}^\infty \int_{x_d}^\infty 
  \prod_{j=2}^{d-1} x_j^{a-1} 
  \exp\left\{-b \sum_{j=1}^d x_j\right\} 
  \ddr x_2 \dots \ddr x_{d-1} \times \\
  & \int_0^\infty x_d (z x_d^2)^{a-1} \exp\{-bx_d(z+1)\} \ddr x_d 
  \leq \ldots\leq K' \frac{z^{a-1}}{(z+1)^{2a}}.
\end{align*}
Hence $\lambda_1(\Omega^{-1})/\lambda_d(\Omega^{-1})$ has finite $k$-th moment as long as $a > k$.  
Now, since
  \[
  \frac{\text{tr}(\Gamma \Gamma^T)}{\lambda_d(\Omega)} 
  = \text{tr}(\Gamma \Gamma^T) \lambda_1(\Omega^{-1})
  \]
and the $k$-th moment of the product of independent random variables is the product of the $k$-th moments, it remains to show that both $\text{tr}(\Gamma \Gamma^T)$ and $\lambda_1(\Omega^{-1})$ have finite $k$-th moment. 
To this aim, it is sufficient to note that $\text{tr}(\Gamma \Gamma^T) \sim \chi^2_{dr}$  and $\lambda_1(\Omega^{-1})$ has the distribution of the first order statistics of a sample of $d$ independent gamma random variables. The proof is then complete.
\end{proof}
\begin{proof}[Proof of Corollary \ref{remarkandrew}]
Since each eigenvalues has independent inverse-gamma distribution, conditions \eqref{eq:2}-\eqref{eq:3}-\eqref{eq:4} are satisfied following part of the proof of Corollary \ref{cor:consistencyfactor}.
\end{proof}
\begin{proof}[Proof of Proposition \ref{prop:1}]
Let $M_n=\underline\sigma_n^{-2c_2}=n$  with $\epsilon_n=n^{-\gamma}(\log n)^{t}$ for $t_0=t+s$ and $H_n=\lfloor Cn\epsilon_n^2/\log n\rfloor$ for a positive constant to be determined later. Also define $\F_n$ and $\F_{n,\bj,\bl}$ as in the proof of Theorem 2 with $\epsilon=\epsilon_n$. 
We prove next that
\begin{align}
  \label{eq:pier1}
  &\Pi(\F_n^c)\lesssim \edr^{-4n\tilde\epsilon_n^2}\\
  \label{eq:pier2}
  &\sum\nolimits_{\bj,\bl}
  \sqrt{N(\epsilon_n, \F_{n,\bj,\bl},d)}
  \sqrt{\Pi(\F_{n,\bj,\bl})}\edr^{-n\epsilon_n^2}\to 0.
\end{align}
The thesis then follows by an application of Theorem 5 by \citet{ghos:vand:2007}, cfr. version in Theorem 3 of \citet{krui:2010}.

As for \eqref{eq:pier1}, refer to equation 
\eqref{eq:prior_complement}. 
Note that $n\epsilon_n^2=n^{1-2\gamma}(\log n)^{2t_0}$ and $H_n=C n^{1-2\gamma}(\log n)^{2t_0-1}$. We have
\begin{small}
\begin{align*}
  \left\{
  \frac{\edr \alpha}{H_n}\log\frac{1}{\epsilon_n}
  \right\}^{H_n}
  &
  \lesssim\exp\{-H_n\log H_n\}
  =\exp\left\{
  -C n^{1-2\gamma}(\log n)^{2t_0-1}
  \log (C n^{1-2\gamma}(\log n)^{2t_0-1})
  \right\}\\
  &\lesssim\exp\left\{
  -C n^{1-2\gamma}(\log n)^{2t_0-1}
  (1-2\gamma)\log n
  \right\}
  =\exp\left\{-(1-2\gamma)C n\epsilon_n^2\right\}\\
  H_n\edr^{-c_1\underline\sigma_n^{-2 c_2}}
  &=Cn^{1-2\gamma}(\log n)^{2t_0-1}\edr^{-nc_1}
  =o\left(
  \exp\left\{-n\epsilon_n^2\right\}
  \right)\\
  H_n\underline\sigma_n^{-2c_3}\left(1+\epsilon_n/\sqrt d \right)^{-c_3M_n}
  &\leq Cn^{c_3/c_2+1-2\gamma}(\log n)^{2t_0-1}
  \exp\{-c_3n\epsilon_n/(2\sqrt d)\}
  = o\left(
  \exp\left\{-n\epsilon_n^2\right\}\right)
\end{align*}
\end{small}
Therefore
  $\Pi(\F_n^c)
  \lesssim
  \exp\left\{-(1-2\gamma)C n\epsilon_n^2\right\}
  \lesssim \edr^{-4n\tilde\epsilon_n^2}$
for all large $n$ since $t<t_0$, 
Note that the rate at which the prior probability of the complement of the sieve vanishes is determined by the truncation level of the stick-breaking weights.

As for \eqref{eq:pier2}, refer to equation 
\eqref{eq:entr_shell}. 
Based on the inequality $d(f,g)^2\leq \| f-g \|_1$,
  $$N(\epsilon_n,\F_{n,\bj,\bl},d)
  \lesssim \exp\left\{
  dH_n\,\log n
  +H_n\log\Small{\frac{C_1}{\epsilon_n^2}}
  +\Small{\sum_{h\leq H_n}}
  \log\Small{\frac{n^{c_4d}j_h^{d-1}}{\epsilon_n^{2d}}}
  +\Small{\frac{d(d-1)}{2}}
  \log\Small{\frac{2d\,n^{2^{l_h}}}{\epsilon_n^4}}\right\}$$
Since
  $H_n\log(1/\epsilon_n)
  =\gamma C n\epsilon_n^2
  +o\big(n\epsilon_n^2\big)$, 
we also have
\begin{align*}
  N(\epsilon_n,\F_{n,\bj,\bl},d)
  &\lesssim \exp\left\{
  \left[d+c_4d+2\gamma+2\gamma d+2\gamma d(d-1)\right]
  C n\epsilon_n^2\right\}
  \Small{\prod_{h\leq H_n}}
  j_h^{d-1}n^{\frac{d(d-1)}{2}2^{l_h}}\\
  &< 
  \exp\left\{
  d^*C n\epsilon_n^2\right\}
  \Small{\prod_{h\leq H_n}}
  j_h^{d-1}n^{\frac{d(d-1)}{2}2^{l_h}}
\end{align*}
where $d^*=d+c_4d+1+d+d(d-1)$ and $\gamma< 1/2$ has been used in the last inequality. By using
\eqref{eq:prior_shell}, 
  $\sqrt{N(\epsilon_n, \F_{n,\bj,\bl},d)}
  \sqrt{\Pi(\F_{n,\bj,\bl})}$ 
can be bounded by a multiple of
  $$\exp\left\{ (d^*/2)C n\epsilon_n^2\right\}
  \Small{\prod_{h\leq H_n}}
  j_h^{\frac{d-1}{2}}[\sqrt{n}(j_h-1)]^{-\Indic_{(j_h\geq 2)}(r+1)}
  n^{\frac{d(d-1)}{4}2^{l_h}
  -\Indic_{(l_h\geq 1)}\frac{\kappa}{2}2^{l_h-1}}$$
Summing with respect to $\bj$ and $\bl$, by arguments similar to those used in the proof of Theorem \ref{th:consistency}, we obtain, for $n$ large enough and when $r>(d-1)/2$ and $\kappa>d(d-1)$, the upper bound
\begin{align*}
  \sum\nolimits_{\bj,\bl}\sqrt{N(\epsilon_n, \F_{n,\bj,\bl},d)}
  \sqrt{\Pi(\F_{n,\bj,\bl})}
  &\leq  \exp\left\{(d^*/2)
  C n\epsilon_n^2\right\}
  (1+n^{-\frac{r+1}{2}}K)^{H_n}
  \left[2n^{\frac{d(d-1)}{4}}\right]^{H_n}\\
  &\lesssim\exp\left\{\left(d^*/2+d(d-1)/4 \right)
  C n\epsilon_n^2\right\}
\end{align*}
where $K:=\sum_{j\geq2}j^{(d-1)/2}(j-1)^{-(r+1)}<\infty$.
Hence \eqref{eq:pier2} is satisfied 
for $C$ sufficiently small to satisfy 
  $C\leq 4/[2d^*+d(d-1)]$.
The proof is complete.  
\end{proof}
\begin{proof}[Proof of Proposition \ref{prop:2}]
The proof follows the arguments presented in Theorem 4 of \citet{shen:etal:2012} adapted to the location-scale case. 
Denote by $F_0\, g=\int g(x)f_0(x)\ddr x$ the expectation of $g(X)$ under $X\sim f_0$. Consider $n$ large enough such that $\tilde\epsilon_n<s_0^\beta$ and fix $\sigma_n^\beta=\tilde\epsilon_n \{\log(1/\tilde\epsilon_n)\}^{-1}$. As in Proposition 1 of \citet{shen:etal:2012}, define $E_{\sigma_n}=\{\|x\|\in\R^d:\ f_0(x)\geq \sigma_n^{(4\beta+2\epsilon+8)/\delta}\}$ for some $\delta\in(0,1)$ and $a_{\sigma_n}=a_0\{\log(1/\sigma_n)\}^{1/\tau}$. Then, $F_0(E_{\sigma_n}^c)\leq B_0\sigma_n^{4\beta+2\epsilon+8}$ for some constant $B_0$, $E_{\sigma_n}\subset\{\|x\|\in\R^d:\ \|x\|\leq a_{\sigma_n}\}$ and, there exists a density $\tilde h_{\sigma_n}$ with support $E_{\sigma_n}$ such that $d(f_0,\phi_{\sigma_n}\star \tilde h_{\sigma_n})\lesssim \sigma_n^{\beta}$.  
Find $b_1>\max\{1,1/(2\beta)\}$ such that 
  $\tilde\epsilon_n^{b_1}\{\log(1/\epsilon_n)\}^{5/4}
  \leq \tilde\epsilon_n$.
Apply Theorem B1 of \citet{shen:etal:2012} 
for $\sigma=\sigma_n$, $a=a_{\sigma_n}$, $F=\tilde h_{\sigma_n}$ and $\epsilon=\tilde\epsilon_n^{2b_1}$ to get 
  $F_{\sigma_n}=\sum_{j=1}^N p_j\delta_{\mu_j}$ with 
  $$N\leq D[(a_{\sigma_n}\sigma_n^{-1}\vee1)\log(1/\tilde\epsilon_n^{2b_1})]^d
  \lesssim \sigma_n^{-d}\{\log(1/\tilde\epsilon_n)\}^{d+d/\tau}$$
many support points inside $\{x\in\R:\ \|x\|\leq a_{\sigma_n}\}$ such that 
  $$\| \phi_{\sigma_n}\star\tilde h_{\sigma_n}-
  \phi_{\sigma_n}\star F_{\sigma_n} \|_1
  \lesssim \tilde\epsilon_n^{2b_1}
  \{\log(1/\tilde\epsilon_n^{2b_1})\}^{1/2}.$$
Move each of the support point of $F_{\sigma_n}$ to the nearest point on the grid 
  $\{(n_1,\ldots,n_d)\sigma_n\tilde\epsilon_n^{2b_1}: n_i\in\mathds{Z},
  |n_i|<\lceil a_{\sigma_n}/(\sigma_n\tilde\epsilon_n^{2b_1}) \rceil,
  i=1,\ldots,d\}$,
so that $\min_{j\neq l}\|\mu_j-\mu_l\|>\sigma_n\tilde\epsilon_n^{2b_1}$. These moves cost at most a constant times $\tilde\epsilon_n^{2b_1}$ to the $L_1$ distance. 
Modify further $F_{\sigma_n}$ so that $(\mu_1,\ldots,\mu_N)$ form a $\sigma_na_{\sigma_n}^{\tau/2}$-net of $\{x\in\R^d: \|x\|\leq a_{\sigma_n} \}$.
This can be achieved by adding at most 
  $[a_{\sigma_n}/(\sigma_n a_{\sigma_n}^{\tau/2})+1]^d
  \lesssim\sigma_n^{-d}a_{\sigma_n}^{d-d\tau/2}
  \lesssim\sigma_n^{-d}\{\log(1/\sigma_n)\}^{d/\tau-d/2}$ 
support points so that we still have
  $N\lesssim \sigma_n^{-d}\{\log(1/\tilde\epsilon_n)\}^{d+d/\tau}$.
Finally, modify the probability masses $p_1,\ldots,p_N$ to a version for which $p_j\geq \tilde\epsilon_n^{4db_1}$ for $j=1,\ldots,N$. 
By a suitable extension of Lemma 3 of \citet{krui:2010} to $d$ dimensions, the $L_1$ distance changes by the $L_1$ distance between the original and the new probability weights, and the latter can be shown to be smaller than $2(N-1)\tilde\epsilon_n^{4db_1}$. In conclusion, we have
\begin{align}\label{eq:discr}
  \| \phi_{\sigma_n}\star\tilde h_{\sigma_n}-
  \phi_{\sigma_n}\star F_{\sigma_n} \|_1
  &\lesssim \tilde\epsilon_n^{2b_1}\{\log(1/\tilde\epsilon_n^{2b_1})\}^{1/2}
  +\tilde\epsilon_n^{2b_1}+N \tilde\epsilon_n^{4db_1}\nonumber \\
  &\lesssim \tilde\epsilon_n^{2b_1}\{\log(1/\tilde\epsilon_n)\}^{1/2}
  +\sigma_n^{-d}\{\log(1/\tilde\epsilon_n)\}^{d+d/\tau}
  \tilde\epsilon_n^{4db_1}\nonumber \\
  &= \tilde\epsilon_n^{2b_1}\{\log(1/\tilde\epsilon_n)\}^{1/2}
  +\tilde\epsilon_n^{d(4b_1-1/\beta)}
  \{\log(1/\tilde\epsilon_n)\}^{1+1/\tau+1/\beta}\nonumber \\
  &\lesssim \tilde\epsilon_n^{2b_1}\{\log(1/\tilde\epsilon_n)\}^{1/2}
\end{align}
since $d(4b_1-1/\beta)>2b_1$ for $b_1>1/(4d-2)\beta$ and the latter follows by the assumption made on $b_1$.

Place disjoint balls $U_j$ with centers at $\mu_1,\ldots,\mu_N$ with diameter $\sigma_n\tilde\epsilon_n^{2 b_1}$ each. 
Because of assumption \eqref{eq:fstar} on $f^*$, for any $\|x\|\leq a_{\sigma_n}$,
  $$f^*(x)\geq c^*\edr^{-b^*a_{\sigma_n}^{\tau^*}}
  \geq c^*\exp\{-b^* a_0^{\tau^*}\log(1/\sigma_n)^{\tau^*/\tau}\}
  \geq c^*\sigma_n^{b^* a_0^{\tau^*}
  \log(1/\sigma_n)^{\tau^*/\tau-1}}
  \gtrsim \sigma_n^{b^* a_0^{\tau^*}},$$
where the second inequality follows from the assumption that $\tau^*\leq\tau$. This implies that for $n$ sufficiently large and some constant $a_1$,
  $a_1 \sigma_n^{b^* a_0^{\tau^*}}(\sigma_n\tilde\epsilon_n^{2 b_1})^d
  \leq P^*(U_j)\leq 1$.
Further extend this to a partition $U_1,\ldots,U_K$ of $\R^d$ such that 
  $a_1 \sigma_n^{b^* a_0^{\tau^*}}(\sigma_n\tilde\epsilon_n^{2 b_1})^d
  \leq P^*(U_j)
  \leq 1$
still holds. We can still have
  $K\lesssim\sigma_n^{-d}\{\log(1/\tilde\epsilon_n)\}^{d+d/\tau}
  = \tilde\epsilon_n^{-d/\beta}
  \{\log(1/\tilde\epsilon_n)\}^{sd}$  
for $s=1+1/\beta+1/\tau$. 
The next step is to construct a partition of $\R^d\times{\cal S}$. To this aim, let 
  $$S_{\sigma_n}
  =\{\Sigma\in{\cal S}:\
  \sigma_n^{-2}< \lambda_i(\Sigma^{-1})
  < \sigma_n^{-2}(1+\sigma_n^{2\beta}),\ i=1,\ldots,d\}$$
which satisfies
  $P^*(S_{\sigma_n})\gtrsim
  \edr^{-C_3\sigma_n^{-\kappa^*}}$
by hypothesis \eqref{eq:Gstar}.
Define $V_j=U_j\times S_{\sigma_n}$ for $j=1,\ldots,K$, and extend $\{ V_1,\ldots,V_K \}$ to a partition $\{V_1,\ldots,V_{M}\}$ of $\R^d\times {\cal S}$ such that
  $$a_2 \sigma_n^{b^* a_0^{\tau^*}+d}
  \sigma_n\tilde\epsilon_n^{2db_1}
  \edr^{-C_3\sigma_n^{-\kappa^*}}
  \leq\alpha P^*(V_j)\leq 1$$
for all $j=1,\ldots,M$, for some positive constant $a_2$ and $n$ sufficiently large. We can still have 
  $M
  \lesssim
  \tilde\epsilon_n^{-d/\beta}
  \{\log(1/\tilde\epsilon_n)\}^{sd}$.   

Set $p_j=0$ for $j>N$ and consider the set ${\cal P}_{\sigma_n}\subset{\mathscr P}$ of probability measures $P$ on $\R^d\times{\cal S}$ with
  $$\sum\nolimits_{j=1}^{M}|P(V_j)-p_j|
  \leq 2\tilde\epsilon_n^{2db_1}\edr^{-C_3\sigma_n^{-\kappa^*}}.$$
Note that
\begin{align*}
  M\;\tilde\epsilon_n^{2db_1}\edr^{-C_3\sigma_n^{-\kappa^*}}
  &\lesssim \tilde\epsilon_n^{-d/\beta}
  \{\log(1/\tilde\epsilon_n)\}^{sd}
  \tilde\epsilon_n^{2db_1}\edr^{-C_3\sigma_n^{-\kappa^*}}\leq 1\\
  \min_{1\leq j\leq M} [\alpha P^*(V_j)]^{1/3}
  &\geq \left\{ a_2 \sigma_n^{b^* a_0^{\tau^*}+d} 
  \tilde\epsilon_n^{2db_1}
  \edr^{-C_3\sigma_n^{-\kappa^*}}\right\}^{1/3}\\
  &=a_2^{1/3}\tilde\epsilon_n^{2db_1}\edr^{-C_3\sigma_n^{-\kappa^*}}
  \left\{ \sigma_n^{b^* a_0^{\tau^*}+d}
  \tilde\epsilon_n^{-4db_1}\edr^{2C_3\sigma_n^{-\kappa^*}}\right\}^{1/3}
  \geq a_2^{1/3}\tilde\epsilon_n^{2db_1}\edr^{-C_3\sigma_n^{-\kappa^*}}
\end{align*}
for $n$ sufficiently large. 
Hence, by Lemma 10 of \citet{ghos:vand:2007},
\begin{align}\label{eq:pp4}
  {\cal D}_{\alpha,P^*}({\cal P}_{\sigma_n})
  &\geq C'\exp\left\{-c\,M\log\left[
  1/\tilde\epsilon_n^{2db_1}\edr^{-C_3\sigma_n^{-\kappa^*}}
  \right]\right\}
  \geq C''\exp\{-c'M\sigma_n^{-\kappa^*}\}\nonumber\\
  &\geq C''\exp\{-c''\tilde\epsilon_n^{-d/\beta}
  \{\log(1/\tilde\epsilon_n)\}^{sd}\sigma_n^{-\kappa^*}\}\nonumber\\
  &=C''\exp\{-c'''\tilde\epsilon_n^{-(d+\kappa^*)/\beta}
  \{\log(1/\tilde\epsilon_n)\}^{d(s+\kappa^*/\beta)}\}
  \geq \exp\{-Cn\tilde\epsilon_n^2\}
\end{align}
for some constants $C,C',C'',c,c',c'',c'''$ and for $t\geq d(s+\kappa^*/\beta)/(2+(d+\kappa^*)/\beta)$ since
\begin{align*}
  \tilde\epsilon_n^{-(d+\kappa^*)/\beta}
  \{\log(1/\tilde\epsilon_n)\}^{d(s+\kappa^*/\beta)}
  &=n^{\frac{d+\kappa^*}{2\beta+d+\kappa^*}}(\log n)^{-(d+\kappa^*)t/\beta}
  \{\log[ n^{\frac{\beta}{2\beta+d+\kappa^*}}
  (\log n)^{-t}] \}^{d(s+\kappa^*/\beta)}\\
  &\lesssim n^{\frac{d+\kappa^*}{2\beta+d+\kappa^*}}
  (\log n)^{d(s+\kappa^*/\beta)-(d+\kappa^*)t/\beta}
  \leq n\tilde\epsilon_n^2
\end{align*}
when $d(s+\kappa^*/\beta)-(d+\kappa^*)t/\beta\leq 2t$, cfr. \eqref{eq:rate_sub}.

We show next that, for any $P\in{\cal P}_{\sigma_n}$, 
  $d(f_0,f_P)
  \lesssim\sigma_n^\beta$.
By triangle inequality,
\begin{align*}
  d(f_0,f_P)
  &\leq 
  d(f_0,\phi_{\sigma_n}\star \tilde h_{\sigma_n})
  +d(\phi_{\sigma_n}\star \tilde h_{\sigma_n},
  \phi_{\sigma_n}\star F_{\sigma_n})
  +d(f_P,\phi_{\sigma_n}\star F_{\sigma_n})  
\end{align*}
and $d(f_0,\phi_{\sigma_n}\star \tilde h_{\sigma_n})\lesssim \sigma_n^{\beta}$ by Proposition 1 of \citet{shen:etal:2012}, while, by \eqref{eq:discr}, 
  $$d(\phi_{\sigma_n}\star \tilde h_{\sigma_n},
  \phi_{\sigma_n}\star F_{\sigma_n})
  \lesssim \tilde\epsilon_n^{b_1}\{\log(1/\tilde\epsilon_n)\}^{1/4}
  \leq \tilde\epsilon_n
  \{\log (1/\tilde\epsilon_n)\}^{-1}
  =\sigma_n^{\beta}$$
since, by assumption, 
  $\tilde\epsilon_n^{b_1}\{\log(1/\epsilon_n)\}^{5/4}
  \leq \tilde\epsilon_n$.
In order to show that also $d(f_P,\phi_{\sigma_n}\star F_{\sigma_n})\lesssim \sigma_n^\beta$, we use arguments presented in Lemma 5 of \citet{ghos:vand:2007}. Let $V_0=\bigcup_{j>N} V_j$. After some algebra,
\begin{multline*}
  f_P(x)-\phi_{\sigma_n}\star F_{\sigma_n}(x)
  =\int_{V_0} \phi_\Sigma(x-\mu)\ddr P(\mu,\Sigma)
  +\sum\nolimits_{j=1}^N
  \int_{V_j} (\phi_\Sigma(x-\mu)-\phi_\Sigma(x-\mu_j)\ddr P(\mu,\Sigma)\\ 
  +\sum\nolimits_{j=1}^N
  \int_{V_j} (\phi_\Sigma(x-\mu_j)-\phi_{\sigma_n}(x-\mu_j)\ddr P(\mu,\Sigma) 
  +\sum\nolimits_{j=1}^N
  \phi_{\sigma_n}(x-\mu_j)\left[ P(V_j)-p_j \right]
\end{multline*}
Note that $P(V_0)=1-\sum_{j=1}^NP(V_j)\leq\sum_{j=1}^N|P(V_j)-p_j|$. Hence, by triangle inequality,
\begin{multline*}
  \|f_P-\phi_{\sigma_n}\star F_{\sigma_n}\|_1
  \leq\sum\nolimits_{j=1}^N
  \int_{V_j} \|\phi_\Sigma(\cdot-\mu)-\phi_\Sigma(\cdot-\mu_j)\|_1
  \ddr P(\mu,\Sigma)\\ 
  +\sum\nolimits_{j=1}^N
  \int_{V_j} \|\phi_\Sigma-\phi_{\sigma_n}\|_1\ddr P(\mu,\Sigma) 
  +2\sum\nolimits_{j=1}^N\left| P(V_j)-p_j \right|
\end{multline*}
Recall now that $V_j=U_j\times S_{\sigma_n}$ for $j=1,\ldots,N$ so that, by construction,
  $\|\mu-\mu'\|\leq\sigma_n\tilde\epsilon_n^{2b_1}$
for any $\mu,\mu'\in U_j$.
Also any $\Sigma\in S_{\sigma_n}$ satisfies $\det(\Sigma^{-1})\geq 2^{-d}\sigma_n^{-2d}$, $y^T\Sigma^{-1}y\leq \|y\|\sigma_n^{-2}$ for any $y\in\R^d$ and  
  $|\tr(\sigma_n^2\Sigma^{-1})-d-\log\det(\sigma_n^2\Sigma^{-1})|
  \leq d\sigma_n^{2\beta}$.
Hence, by using arguments similar to the proof of Lemma \ref{lemma:sieve}, 
\begin{align*}
  &\|\phi_\Sigma(\cdot-\mu)-\phi_\Sigma(\cdot-\mu_j)\|_1
  \leq \|\mu-\mu_j\|\lambda_1(\Sigma^{-1})^{1/2}
  \leq\sigma_n\tilde\epsilon_n^{2b_1}\sigma_n^{-1}
  =\tilde\epsilon_n^{2b_1}\\
  &\|\phi_\Sigma-\phi_{\sigma_n}\|_1
  \leq \left| \tr(\sigma_n^{-2}\Sigma^{-1})
  -d-\log\det(\sigma_n\Sigma^{-1}) \right|
  \leq d\sigma_n^{2\beta}
\end{align*}  
for any $(\mu,\Sigma)\in V_j$ and any $j=1,\ldots,N$. Hence, for $P\in{\cal P}_{\sigma_n}$,
  $$\|f_P-\phi_{\sigma_n}\star F_{\sigma_n}\|_1
  \leq\tilde\epsilon_n^{2b_1} 
  +d\sigma_n^{2\beta}
  +4\tilde\epsilon_n^{2db_1}\edr^{-C_3\sigma_n^{-\kappa^*}}
  \lesssim \sigma_n^{2\beta}$$
since 
  $\tilde\epsilon_n^{2b_1}\leq \sigma_n^{2\beta}$ 
by the assumption on $b_1$. It follows that $d(f_P,\phi_{\sigma_n}\star F_{\sigma_n})\lesssim \sigma_n^\beta$ as desired.

The last step is to control the ratio $f_P/f_0$ for any $P\in{\cal P}_{\sigma_n}$ in view of an application of Lemma B2 of \citet{shen:etal:2012}.  
Since $(\mu_1,\ldots,\mu_N)$ forms a $\sigma_na_{\sigma_n}$-net of $\{x\in\R^d:\ \|x\|\leq a_{\sigma_n}\}$,
the set
  $\{\mu\in\R^d:\ \|\mu-x\|\leq \sigma_n(a_{\sigma_n}+\tilde\epsilon_n^{2b_1}/2)\}$
contains at least one $U_{j}$ for $j=1,\ldots,N$. Call $J(x)$ such index $j$  and recall that, for $P\in{\cal P}_\sigma$,
  $$
  P(V_{J(x)})\geq p_{J(x)}-2\tilde\epsilon_n^{2db_1}
  \edr^{-C_3\sigma_n^{-\kappa^*}}
  \geq \tilde\epsilon_n^{4db_1}-2\tilde\epsilon_n^{2db_1}
  \edr^{-C_3\sigma_n^{-\kappa^*}}
  \gtrsim \tilde\epsilon_n^{4db_1}
  $$ 
Recall that $f_0$ is bounded.
For $\|x\|\leq a_{\sigma_n}$ we have
\begin{align*}
  \frac{f_P(x)}{f_0(x)}
  &\geq K_1 \int\nolimits_{S_{\sigma_n}}
  \int\nolimits_{\|\mu-x\|\leq\sigma_n(a_{\sigma_n}^{\tau/2}
  +\tilde\epsilon_n^{2b_1}/2)}
  \phi_\Sigma(x-z)\ddr P(z,\Sigma)\\
  &\geq K_2 \sigma_n^{-d}
  \exp\left\{
  -\frac{1}{2\sigma_n^2}[\sigma_n(a_{\sigma_n}^{\tau/2}
  +\tilde\epsilon_n^{2b_1}/2)]^2
  \right\}
  P(V_{J(x)})\\
  &\geq K_2\sigma_n^{-d}\edr^{-2a_{\sigma_n}^\tau}P(V_{J(x)})
  \geq K_3\sigma_n^{-d}
  \edr^{-2a_{\sigma_n}^\tau}\tilde\epsilon_n^{4db_1}
\end{align*}
for some constants $K_1,K_2,K_3$. Also, for every $\|x\|>a_{\sigma_n}$,
\begin{align*}
  \frac{f_P(x)}{f_0(x)}
  &\geq K_1 \int\nolimits_{S_{\sigma_n}}
  \int\nolimits_{\|\mu\|\leq a_{\sigma_n}}
  \phi_\Sigma(x-\mu)\ddr P(\mu,\Sigma)\\
  &\geq K_2 \sigma_n^{-d}
  \int\nolimits_{S_{\sigma_n}}
  \int\nolimits_{\|\mu\|\leq a_{\sigma_n}}
  \exp\left\{-\frac{1}{2\sigma^2}\|x-\mu\|^2\right\}\ddr P(\mu,\sigma)\\
  &\geq K_2 \sigma_n^{-d}
  \exp\{-2\|x\|^2/\sigma_n^2\}
  P\left(\{\mu\in\R: \|\mu\|\leq a_{\sigma_n}\}\times S_{\sigma_n}\right)\\
  &\geq K_2\sigma_n^{-d}\exp\{-2\|x\|^2/\sigma_n^2\}
  \sum\nolimits_{j=1}^N P(V_j)
  \geq K_4\sigma_n^{-d}\exp\{-2\|x\|^2/\sigma_n^2\}
\end{align*}
for some constant $K_4$ because $\|x-\mu\|^2\leq 2\|x\|^2+2\|\mu\|^2\leq 4\|x\|^2$ (since $\|\mu\|\leq a_{\sigma_n}<\|x\|$) and 
\begin{align*}
  \sum\nolimits_{j=1}^N P(V_j)
  &=1-\sum\nolimits_{j>N}^{M} P(V_j)
  =1-\sum\nolimits_{j>N}^{M} |P(V_j)-p_j|\\
  &\geq 1-\sum\nolimits_{j=1}^{M} |P(V_j)-p_j|
  \geq 1-2\tilde\epsilon_n^{2db_1}\edr^{-C_3\sigma_n^{-\kappa^*}}
\end{align*}
and the last inequality follows from the definition of ${\cal P}_\sigma$. 
Set
  $\lambda=K_3\sigma_n^{-d}\edr^{-2a_{\sigma_n}^\tau}
  \tilde\epsilon_n^{4db_1}$ 
and notice that 
\begin{align*}
  \log(1/\lambda)
  &=\log(1/\tilde\epsilon_n^{4db_1}\sigma_n^{-d})
  +\log\left(\exp\{2a_0^2\log(1/\sigma_n)\}
  \right)\\
  &=\log\left(1/\tilde\epsilon_n^{d(4b_1-1/\beta)}
  \{\log(1/\tilde\epsilon_n)\}^{d/\beta}\right)
  +2a_0^2\log(1/\sigma_n)
  \lesssim \log(1/\tilde\epsilon_n)
\end{align*}
since $4b_1>1/\beta$. 
Moreover, for any $P\in{\cal P}_\sigma$,
  $\{x\in\R: f_P(x)/f_0(x)<\lambda\}
  \subset
  \{x\in\R: \|x\|>a_{\sigma_n}\}$
so that
\begin{align*}
  F_0\left\{\left(\log\frac{f_0}{f_P}\right)^2\Indic
  \left(\frac{f_P}{f_0}<\lambda\right)\right\}
  &\leq \int\nolimits_{\|x\|>a_{\sigma_n}}
  \left(\log\frac{f_P(x)}{f_0(x)}\right)^2f_0(x)\ddr x\\
  &\leq \int\nolimits_{\|x\|>a_{\sigma_n}}
  \left(\log\left[
  \frac{\sigma_n^d}{K_4}\exp\{-2\|x\|^2/\sigma_n^2\}
  \right]\right)^2
  f_0(x)\ddr x\\
  &\leq \frac{K_5}{\sigma_n^4}
  \int\nolimits_{\|x\|>a_{\sigma_n}}
  \|x\|^4f_0(x)\ddr x  
  = \frac{K_5}{\sigma_n^4}
  \int\nolimits_{\|x\|>a_{\sigma_n}}
  \|x\|^4f_0(x)^{1/2}\, f_0(x)^{1/2}\ddr x\\
  &\leq \frac{K_5}{\sigma_n^4}
  \left\{\int\nolimits_{\|x\|>a_{\sigma_n}}\|x\|^8f_0(x)\ddr x
  \int\nolimits_{\|x\|>a_{\sigma_n}} f_0(x)\ddr x \right\}^{1/2}\\
  &\leq \frac{K_5}{\sigma_n^4}
  (F_0 \|X\|^8)^{1/2}(F_0\{x: \|x\|>a_{\sigma_n}\})^{1/2}
  \leq \frac{K_6}{\sigma_n^4}
  (F_0\{x: \|x\|>a_{\sigma_n}\})^{1/2}\\
  &\leq \frac{K_6}{\sigma_n^4}F_0(E_{\sigma_n}^c)^{1/2}
  \leq \frac{K_6}{\sigma_n^4} B_0 \sigma_n^{2\beta+\epsilon+4}
  \leq K_7\sigma_n^{2\beta+\epsilon}
\end{align*}
for some constants $K_5,K_6,K_7$. The forth inequality follows from Cauchy-Schwartz, the sixth inequality follows from $F_0 \|X\|^m<\infty$ for all $m>0$ because of the tail condition \eqref{eq:8} on $f_0$, the seventh and eighth inequalities follows from Proposition 1 of \citet{shen:etal:2012}. Since $\log x\leq (\log x)^2$ for $x>\edr^1$ and $\lambda<\edr^{-1}$ for $n$ sufficiently large, we also have that
  $F_0\{\log(f_0/f_P)\Indic(f_P/f_0<\lambda)\}
  \leq K_8\sigma_n^{2\beta+\epsilon}$.
Now apply Lemma B2 of \citet{shen:etal:2012} to conclude that both $F_0\{\log f_0/f_P\}$ and $F_0\{(\log f_0/f_P)^2\}$ are bounded by 
  $$K_9\{\log(1/\lambda)\}^2\sigma_n^{2\beta}
  \leq A \sigma_n^{2\beta}\{\log(1/\tilde\epsilon_n)\}^2
  =A \tilde\epsilon_n^2$$
for some positive constant $A$. Together with \eqref{eq:pp4}, this completes the proof.
\end{proof}
%===========================================================================

%===========================================================================
\bibliographystyle{asa}
\bibliography{biblio}
%===========================================================================

\end{document}